\newtheorem{theorem}{Theorem}[section]
\newtheorem*{theorem*}{Theorem}
\newtheorem*{proposition*}{Proposition}
\newtheorem{lemma}[theorem]{Lemma}
\newtheorem*{lemma*}{Lemma}
\newtheorem*{corollary*}{Corollary}
\newtheorem*{conjecture*}{Conjecture}
\newtheorem*{definition*}{Definition}
\theoremstyle{remark}
\newtheorem*{remark*}{Remark}
\newtheorem*{example*}{Example}
\numberwithin{equation}{section}
\newcommand{\C}{\mathbb{C}}
\renewcommand{\H}{\mathbb{H}}
\newcommand{\N}{\mathbb{N}}
\newcommand{\Q}{\mathbb{Q}}
\newcommand{\Z}{\mathbb{Z}}
\newcommand{\cU}{\mathcal{U}}
\title{MacMahonesque partition functions detect sets related to primes}
\author{Kevin Gomez}
\address{Dept. of Mathematics, University of Virginia, Charlottesville, VA 22904}
\email{vhe4ht@virginia.edu}
\subjclass[2020]{Primary 11P81, 05A17; Secondary 11Fxx}
\keywords{partitions, primes, quasimodular forms}
\begin{document}

\begin{abstract}
    Recent work by Craig, van Ittersum, and Ono constructs explicit expressions in the partition functions of MacMahon that detect the prime numbers. Furthermore, they define generalizations, the MacMahonesque functions, and prove there are infinitely many such expressions in these functions. Here, we show how to modify and adapt their construction to detect cubes of primes as well as primes in arithmetic progressions.
\end{abstract}

\maketitle

\section{Introduction and Statement of Results}

Recent work has investigated equations in partition functions which detect the set of primes. To be precise, in \cite{CraigIttersumOno}, Craig, van Ittersum, and Ono construct prime-detecting expressions using MacMahon's~\cite{MacMahon} $q$-series
\begin{align} \label{def:Ua} 
    \mathcal{U}_a(q)=\sum_{n\geq1}M_a(n)\,q^n:=\sum_{0< s_1<s_2<\cdots<s_a} \frac{q^{s_1+s_2+\cdots+s_a}}{(1-q^{s_1})^2(1-q^{s_2})^2\cdots(1-q^{s_a})^2}.
\end{align}
For $a \geq 1$, $M_a(n)$ sums the products of the multiplicities of partitions of $n$ with $a$ different part sizes. That is, we have
\begin{equation}\label{def:Ma}
    M_a(n) = \sum_{\substack{0 < s_1 < s_2 < \dots < s_a \\ n = m_1s_1 + m_2s_2 + \dots+ m_as_a}} m_1m_2 \cdots m_a.
\end{equation}
For positive integers $n$, they prove in Theorem 1 of \cite{CraigIttersumOno} that
\begin{align*}
    (n^2-3n+2) M_1(n) - 8M_2(n)\geq 0,
\end{align*}
and for $n \geq 2$, they prove that this expression vanishes if and only if $n$ is prime. 

They also find a number of other examples, and conjecture that there are only finitely many such intrinsically independent expressions in MacMahon's functions that detect the primes. However, they obtain a natural generalization which yields infinitely many such prime detectors. Namely, for a positive integer $a$ and a vector $\vec{a} = (v_1,v_2,\dots,v_a) \in \N^a$, they define the \textit{MacMahonesque partition function}
\begin{align*}
	M_{\vec{a}}(n) := \sum_{\substack{0 < s_1 < s_2 < \dots < s_a \\ n = m_1s_1 + m_2s_2 + \dots + m_as_a}} m_1^{v_1} m_2^{v_2} \cdots m_a^{v_a},
\end{align*}
which sums monomials of degree $\abs{\vec{a}} := v_1 + v_2 + \dots + v_a$ in the part multiplicities of partitions of $n$. Note that $M_a(n) = M_{\vec{a}}(n)$ for $\vec{a} = (1, \dots, 1) \in \N^a$. They find, for example, that for positive integers $n$, we have
\begin{displaymath}
\begin{split}
    \Psi_1(n):=63M_{(2,2)}(n)-12&M_{(3,0)}(n) 
    -39M_{(3,1)}(n)-12M_{(1,3)}(n)\\
    &+80M_{(1,1,1)}(n)-12M_{(2,0,1)}(n)+12 M_{(2,1,0)}(n) +12 M_{(3,0,0)}(n)\geq 0,
\end{split}
\end{displaymath}
and for $n \geq 2$ that $\Psi_1(n)$ likewise vanishes if and only if $n$ is prime.

These works follow a general philosophy of Schneider, where classical number theoretic topics are informed by partition theory (for example, see~\cite{SchneiderDawseyJust, Schneider1, Schneider2, Schneider3}). In this vein, it is natural to ask if this theorem generalizes to detect other sets. To be precise, which subsets of primes or sets related to primes are exactly detectable from these functions without any further generalization?

To answer this question, we specify two related notion of detecting a set by a $q$-series. Given a set $S \subseteq \N$, we say that a $q$-series $\sum_{n \geq 0} a_n q^n$ \textit{detects} $S$ if, for $n \geq 2$, $a_n = 0$ if and only if $n \in S$. A $q$-series furthermore \textit{strongly detects} $S$ if $a_n \geq 0$ for $n \geq 1$.

We find that cubes of primes are detectable using the MacMahonesque functions in infinitely many ways. Define the generating series
\begin{align*}
    \cU_{\vec{a}}(n) := \sum_{n \geq 1} M_{\vec{a}}(n)q^n.
\end{align*}
We then have the following theorem.

\begin{theorem} \label{thm:primes-cubed}
    There exist infinitely many linearly independent expressions
    \begin{align*}
        \sum_{\abs{\vec{a}} \leq d} c_{\vec{a}}\cU_{\vec{a}}(q)
    \end{align*}
    which detect the set $S$ of primes cubed, where $c_{\vec{a}} \in \Z$ and $d \geq 21$.
\end{theorem}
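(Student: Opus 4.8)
The plan is to prove the stronger assertion that the expressions may be taken to \emph{strongly} detect $S$, since nonnegativity of the coefficients turns the delicate "nonvanishing off $S$" requirement into a positivity statement that is far easier to control. First I would use the framework of \cite{CraigIttersumOno}: each $\mathcal{U}_{\vec a}$ is a quasimodular form on $\mathrm{SL}_2(\Z)$ of weight at most $2\abs{\vec a}$, so every combination with $\abs{\vec a}\le d$ lies in the finite-dimensional space $QM_{\le 2d}$. Splitting $QM_{\le 2d}$ into its Eisenstein-type part (spanned by the $D^jE_k$, whose $n$-th coefficients are the divisor sums $n^j\sigma_{k-1}(n)$) and its cusp-derived part, I would impose the finitely many linear conditions killing the cusp-derived component. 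For a surviving $G=\sum c_{\vec a}\mathcal{U}_{\vec a}$ the coefficient becomes a genuine divisor sum
\[
G(n)=\sum_{de=n}Q(d,e),
\]
with $Q$ symmetric; because only even weights (equivalently, only odd-index $\sigma_{k-1}$) occur on $\mathrm{SL}_2(\Z)$, the attainable $Q$ are precisely the symmetric polynomials divisible by $d+e$ of degree $\le 2d-1$. That the $\mathcal{U}_{\vec a}$ span enough of $QM_{\le 2d}$ to realize an arbitrary such $Q$ is the representability input I would draw from \cite{CraigIttersumOno}.

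The governing combinatorial fact is a divisor-pair characterization of $S$: for $n\ge 2$, one has $n=p^3$ if and only if $n$ admits at least one nontrivial factorization $n=de$ (with $d,e\ge 2$) and \emph{every} nontrivial factorization satisfies $e=d^2$ or $d=e^2$. Indeed primes fail the first clause, while $p^2,p^4,p^6$, the products $pq$, and the composite cubes $m^3$ all admit a nontrivial pair lying off both parabolas. Accordingly I would seek a base detector $G_0(n)=\sum_{de=n}(d+e)\,\tilde Q_0(d,e)$ whose summand vanishes on the trivial pairs and on the parabolas $e=d^2$, $d=e^2$, arranged so that the trivial pair $(1,n)$ and the parabolic pairs cancel exactly when $n=p^3$. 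Evaluating at a prime cube gives
\[
G_0(p^3)=2(1+p^3)\tilde Q_0(1,p^3)+2(p+p^2)\tilde Q_0(p,p^2),
\]
so vanishing on $S$ becomes the identity $(1+x^3)\tilde Q_0(1,x^3)+(x+x^2)\tilde Q_0(x,x^2)\equiv 0$, while forcing primes to survive requires $\tilde Q_0(1,x)>0$ for $x\ge 2$ and hence $\tilde Q_0(x,x^2)<0$ along the parabola.

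Here lies the main obstacle. Since the trivial-pair and parabolic-pair contributions must carry opposite signs, $\tilde Q_0$ cannot be pointwise nonnegative, and the nonnegativity of $G_0(n)$ can hold only through summation. My approach would be to take $\tilde Q_0$ as a large, explicitly positive baseline supported on nontrivial non-parabolic pairs, minus a correction designed solely to enforce the identity above, and then to prove that for \emph{every} $n$ the baseline dominates the correction, with equality forced precisely at prime cubes. Establishing this domination uniformly across all divisor structures—small composites, prime powers, and highly composite $n$ alike—is the crux, and I expect it to be what pins down the threshold $d\ge 21$: the correction has degree comparable to the parabolas' defining equations, and enough baseline degree (hence weight $\le 2d$) is needed for it to dominate everywhere.

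Granting the base detector $G_0$, infinitude and linear independence follow cheaply. For any symmetric $S$ nonnegative on $(0,\infty)^2$, set
\[
H(n)=\sum_{de=n}(d+e)(d-1)(e-1)(e-d^2)^2(d-e^2)^2\,S(d,e),
\]
a nonnegative divisor sum (each factor is $\ge 0$ for $d,e\ge 1$) that vanishes at $n=1$ and at every prime cube, since the trivial pairs are killed by $(d-1)(e-1)$ and the pairs $(p,p^2)$ by $(e-d^2)^2$. As $G_0,H\ge 0$, the sum $G_0+H$ is again a strong detector: $(G_0+H)(n)=0$ iff $G_0(n)=0$ and $H(n)=0$, i.e. iff $n\in S$. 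Letting $S$ range over a family of increasing degree produces infinitely many $H$ with $G_0,H_1,H_2,\dots$ linearly independent, so $\{G_0+H_k\}_k$ is an infinite linearly independent family of strong detectors, each expressed with $\abs{\vec a}\le d$ for some $d\ge 21$ growing with $\deg S$. Finally, since every imposed condition is linear with rational coefficients, I would clear denominators to take $c_{\vec a}\in\Z$, completing the proof.
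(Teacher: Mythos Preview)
Your outline has a genuine gap at its center: you never construct the base detector $G_0$, nor do you prove that any polynomial $\tilde Q_0$ meeting your constraints exists and yields a nonnegative divisor sum. You correctly identify this domination as ``the crux'' and say you ``expect'' the degree requirement to pin down $d\ge 21$, but an expectation is not a proof. The entire content of the theorem lies in exhibiting a form whose coefficients vanish exactly on prime cubes; your proposal defers precisely this step to an unspecified future verification.

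There is also a structural error in the linear-independence step. The attainable $Q$ (after killing the cusp part) are not ``the symmetric polynomials divisible by $d+e$'' but the symmetric polynomials whose every homogeneous piece has odd total degree, i.e.\ those with $Q(-d,-e)=-Q(d,e)$: the building block $D^jG_k$ contributes $d^{j+k-1}e^j$ with $k-1$ odd, and these span exactly that subspace. Your polynomial $(d+e)(d-1)(e-1)(e-d^2)^2(d-e^2)^2\,S(d,e)$ fails this parity constraint (already $(d-1)(e-1)=de-d-e+1$ mixes degrees $0,1,2$), so the resulting $H(n)$ is not the coefficient sequence of any Eisenstein-type quasimodular form, and your argument gives no reason it should be expressible in the $\cU_{\vec a}$ at all.

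For comparison, the paper does not attempt strong detection. It writes down the explicit form
\[
g_{k,\ell}=(D^{3\ell}+D^{2\ell}+D^{\ell}+1)G_{3k+1}-(D^{3k}+D^{2k}+D^{k}+1)G_{3\ell+1},
\]
whose $n$th coefficient equals $(n^{3\ell}+n^{2\ell}+n^\ell+1)\sigma_{3k}(n)-(n^{3k}+n^{2k}+n^k+1)\sigma_{3\ell}(n)$, and proves by direct inequalities that this is strictly positive when $n>p^3$ for some prime $p\mid n$, strictly negative when $n<p^3$ for every prime $p\mid n$, and zero exactly at $n=p^3$. The sign genuinely changes, so these are detectors but not strong detectors; your plan to prove the stronger assertion is therefore aiming at something the paper itself does not establish. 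Linear independence over odd $\ell>k$ is immediate from the distinct top weights $3k+6\ell+1$, and the threshold $d\ge 21$ arises concretely from the smallest case $k=1$, $\ell=3$, for which $g_{1,3}\in\widetilde M_{\le 22}$.
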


\begin{remark*}
    It is natural to ask about other prime powers. Firstly, we consider only odd powers, as our method makes use of quasimodular forms of level 1. The method of proof of Theorem \ref{thm:primes-cubed} then breaks down for powers 5 or higher, because the inequalities required do not continue to hold.
\end{remark*}

\begin{example*}
    For integers $n \geq 2$, we have that
    \begin{align*}
        g^{\ast}(n) &= \overbrace{1118 \cdots 4000}^{63 \text{ digits}} M_{(1)}(n) - \overbrace{1667 \cdots 1760}^{63 \text{ digits}} M_{(3)}(n) + \cdots + \overbrace{2160 \cdots 7200}^{55 \text{ digits}} M_{(15,1,1)}(n)
    \end{align*}
    vanishes if and only if $n = p^3$ for some prime $p$. A complete expression is given in the Appendix.
\end{example*}

We also consider twists of $\cU_{\vec{a}}(q)$ by a root of unity, which enables us to strongly detect primes in arithmetic progressions. In particular, we have the following theorem.
\begin{theorem} \label{thm:primes-ap}
    Let $t$ be a fixed positive integer, $0\leq r< t$ be coprime to $t$, and $\zeta_t$ be a primitive $t^{\text{th}}$ root of unity. Then there exist infinitely many linearly independent expressions
    \begin{align*}
        \sum_{\abs{\vec{a}} \leq d} \sum_{s=0}^{t-1} c_{\vec{a}}(s)\cU_{\vec{a}}(\zeta_t^sq)
    \end{align*}
    which strongly detect the set $S$ of primes congruent to $r$ modulo $t$, where $c_{\vec{a}}(s) \in \C$ and $d \geq 7$.
\end{theorem}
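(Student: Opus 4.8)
The plan is to reduce the twisted expression to a single coefficient condition and then engineer that coefficient by a discrete Fourier argument. Expanding each twist, the coefficient of $q^n$ in $\sum_{\abs{\vec a}\le d}\sum_{s=0}^{t-1} c_{\vec a}(s)\cU_{\vec a}(\zeta_t^s q)$ is
\begin{align*}
    b_n = \sum_{s=0}^{t-1}\zeta_t^{sn}F_s(n), \qquad F_s(n) := \sum_{\abs{\vec a}\le d} c_{\vec a}(s)\, M_{\vec a}(n),
\end{align*}
so each $F_s$ is a constant-coefficient MacMahonesque combination of bounded degree, and the sole role of $s$ is to supply the character value $\zeta_t^{sn}$. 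Recalling the orthogonality relation $\tfrac1t\sum_{s=0}^{t-1}\zeta_t^{s(n-r)}=\mathbbm{1}[n\equiv r\ (\mathrm{mod}\ t)]$, I can use the sum over $s$ to restrict any MacMahonesque expression to the residue class $n\equiv r$.

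For the construction I would first fix, by the results established earlier (generalizing the prime detectors of Craig--van Ittersum--Ono, of which there are infinitely many), a constant-coefficient MacMahonesque combination $D(n)$ with $D(n)\ge 0$ for $n\ge 1$ and $D(n)=0$ if and only if $n$ is prime; I also recall that $\sigma_1(n)=M_{(1)}(n)$ lies in the span. For a constant $C>0$ I then set $F_s = \tfrac1t\zeta_t^{-sr}\bigl(D-C\sigma_1\bigr)$ for every $s$ and add $C\sigma_1$ to $F_0$. By orthogonality,
\begin{align*}
    b_n = C\,\sigma_1(n) + \mathbbm{1}[n\equiv r]\bigl(D(n)-C\,\sigma_1(n)\bigr) = \begin{cases} D(n), & n\equiv r\ (\mathrm{mod}\ t),\\ C\,\sigma_1(n), & n\not\equiv r\ (\mathrm{mod}\ t). \end{cases}
\end{align*}
Hence for $n\ge 2$ one has $b_n=0$ exactly when $n\equiv r$ and $n$ is prime, i.e.\ $n\in S$, while off the class $b_n=C\sigma_1(n)\ge 3C>0$; combined with $b_n\ge 0$ for all $n\ge 1$ (on the class $b_1=D(1)\ge 0$, off it $b_1=C>0$), this is precisely strong detection of $S$. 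Each $F_s$ is a $\C$-linear combination of the MacMahonesque expressions $D$ and $M_{(1)}$, so the required coefficients $c_{\vec a}(s)\in\C$ exist.

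Finally, feeding the infinitely many linearly independent MacMahonesque prime detectors into this construction yields infinitely many expressions of the desired shape; since distinct detectors remain distinct after restriction to the class $n\equiv r$, their linear independence is preserved. I expect the main obstacle to be the realization of a genuine prime detector as a constant-coefficient combination of the $M_{\vec a}$ \emph{together with} its nonnegativity: the inequality $D(n)\ge 0$ must be verified for all $n\ge 1$, and in particular uniformly in the small-$n$ range where the leading asymptotics do not yet control the sign. It is this inequality, rather than the Fourier design, that forces the use of odd divisor sums of high weight and hence produces the degree bookkeeping giving $d\ge 7$ — consistent with the Remark, since for higher odd powers the analogous inequalities fail.
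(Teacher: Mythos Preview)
Your construction is essentially the paper's: both restrict a Craig--van Ittersum--Ono strong prime detector to the class $n\equiv r\pmod t$ via the orthogonality relation $\tfrac1t\sum_{s}\zeta_t^{s(n-r)}=\mathbbm{1}[n\equiv r]$, and fill in off the progression with something strictly positive. Concretely the paper sets $f_{k,\ell}^{r,t}=(D^\ell+1)G_{k+1}-(D^k+1)G_{\ell+1}^{r,t}$, so that on the progression one sees the detector $(n^\ell+1)\sigma_k(n)-(n^k+1)\sigma_\ell(n)$ and off it the positive term $(n^\ell+1)\sigma_k(n)$; your $C\sigma_1(n)$ plays exactly the same role as the latter.

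Two points deserve tightening. First, linear independence: ``distinct detectors remain distinct after restriction to the class'' is not a proof, since restricting linearly independent functions to a sparse subset need not preserve independence. The paper handles this by a weight argument---any nontrivial $\C$-linear combination of the $f_{k,\ell_i}^{r,t}$ has a nonzero component of top quasimodular weight $k+2\ell_j+1$ and hence is nonzero. You can repair your argument the same way, or by comparing growth rates of the $D_j$ along the progression. Second, your closing sentence conflates the Remark about higher odd prime powers (which concerns the cubes-of-primes theorem and the failure of certain sign inequalities there) with the present arithmetic-progression setting; no such obstruction arises here, and the bound $d\ge 7$ simply records that the smallest instance $k=1$, $\ell=3$ has weight $k+2\ell+1=8$, hence $\lvert\vec a\rvert\le 7$.
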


\begin{remark*}
    Craig, in \cite{Craig}, constructs series as in Theorem \ref{thm:primes-ap} which detect primes in arithmetic progression. Specifically, he obtains quasimodular forms with level which detect primes in certain pairs of arithmetic progressions, as a consequence of his primary theorems concerning $q$-multiple zeta values at level $N$.
\end{remark*}

\begin{example*}
    For integers $n \geq 2$, we have that
    \begin{align*}
        f^\ast(n) := 4(199 + 21\omega^{n-1} + 21\omega^{2n-2})M_{(1)}(n) + \cdots + 120960M_{(3,1,1)}(n) - 161280M_{(1,1,1,1)}(n)
    \end{align*}
    vanishes if and only if $n$ is a prime congruent to $1$ modulo $3$, where $\omega := e^{2\pi i/3}$ is a primitive third root of unity. A complete expression is given in the Appendix.
\end{example*}

\subsection{Acknowledgements}

The author would like to thank Will Craig, Ken Ono, Ajit Singh, and the anonymous referee for very helpful comments in the writing of this manuscript. The author also thanks the Thomas Jefferson Fund and grants from the NSF (DMS-2002265 and DMS-2055118).

\section{Quasimodular Forms}

We require the basic elements of the theory of quasimodular forms; see \cite[Section 5.3]{Zagier} for an exposition. For even integers $k \geq 1$, we consider the $k^{\text{th}}$ \textit{Bernoulli numbers} $B_k$ and the \textit{Eisenstein series}
\begin{align} \label{def:g}
    G_k(\tau) := -\frac{B_k}{2k} + \sum_{n=1}^{\infty} \sigma_{k-1}(n)q^n,
\end{align}
with $q := e^{2\pi i\tau}, \tau\in\H$ and $\sigma_{k-1}(n) := \sum_{d \mid n} d^{k-1}$. It is a well-known fact (for example, see~\cite{Ono}) that $G_k$ is quasimodular, and furthermore modular if $k\geq 4$. We denote the algebra of quasimodular forms by $\widetilde{M}$, the subspace of forms with weight $k$ by $\widetilde{M}_k$, and the space of forms with mixed weight $\leq k$ by $\widetilde{M}_{\leq k}$.

We also require the $q$-differential operator
\begin{align}
    D := \frac{1}{2\pi i} \dfrac{d}{d\tau} = q \dfrac{d}{dq}.
\end{align}
For $k \geq 0$, we observe that $D^k : q^n \mapsto n^k q^n$. Ramanujan famously proved derivative identities for $G_2$, $G_4$, and $G_6$  (for example, see \cite[Section 7]{Ramanujan}):
\begin{align} \label{eq:ramanujan}
    DG_2 = - 2G_2^2 + \dfrac{5}{6} G_4, \ \ \ D G_4 = -8 G_2 G_4 + \dfrac{7}{10} G_6, \ \ \ DG_6 = -12 G_2 G_6 + \dfrac{400}{7} G_4^2.
\end{align}
Since $\widetilde{M}$ is generated by $G_2$, $G_4$, and $G_6$, these identities show that $D$ defines a map on the algebra of quasimodular forms which increases weights by $2$; that is, we have that
$D~\colon \widetilde{M}_{k} \to \widetilde{M}_{k+2}.$

\section{Detecting cubes of primes}
Utilizing elementary properties of $D$, we produce quasimodular forms which detect cubes of primes.

\begin{theorem} \label{thm:help}
    Let $k,\ell$ be non-negative odd integers with $\ell > k$. For all $n \geq 2$, the $n^{\text{th}}$ Fourier coefficient of
    \begin{align} \label{def:gkl}
        g_{k,\ell} := (D^{3\ell} + D^{2\ell} + D^{\ell} + 1)G_{3k+1} - (D^{3k} + D^{2k} + D^k + 1)G_{3\ell + 1}
    \end{align}
    vanishes if and only if $n = p^3$ for some prime $p$.
\end{theorem}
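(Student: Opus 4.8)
The plan is to read off the Fourier expansion of $g_{k,\ell}$ explicitly and then reduce the theorem to an injectivity statement about a ratio of divisor sums. Since $k,\ell$ are odd, the weights $3k+1$ and $3\ell+1$ are even, so $G_{3k+1}$ and $G_{3\ell+1}$ are the genuine Eisenstein series of \eqref{def:g}, whose $n$-th coefficients for $n\ge 1$ are $\sigma_{3k}(n)$ and $\sigma_{3\ell}(n)$. Because $D^{j}\colon q^n\mapsto n^jq^n$, the operator $D^{3j}+D^{2j}+D^{j}+1$ multiplies the $n$-th coefficient by $T_j(n):=1+n^{j}+n^{2j}+n^{3j}$. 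Hence for every $n\ge 1$ the $n$-th coefficient of $g_{k,\ell}$ is
$$a_n=T_\ell(n)\,\sigma_{3k}(n)-T_k(n)\,\sigma_{3\ell}(n),$$
the constant terms of the Eisenstein series affecting only $a_0$. I would also record the factorization $T_j(n)=(1+n^{j})(1+n^{2j})$ for later use.

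Next I would settle the ``if'' direction by direct computation. When $n=p^3$ the divisors of $n$ are $1,p,p^2,p^3$, so
$$\sigma_{3j}(p^3)=1+p^{3j}+p^{6j}+p^{9j}=1+(p^3)^{j}+(p^3)^{2j}+(p^3)^{3j}=T_j(p^3).$$
Thus $\sigma_{3k}(n)=T_k(n)$ and $\sigma_{3\ell}(n)=T_\ell(n)$ simultaneously, whence $a_n=T_\ell(n)T_k(n)-T_k(n)T_\ell(n)=0$. This same identity exhibits $T_j(n)$ as the power sum $\sum_{b\in B}b^{j}$ over $B:=\{1,n,n^2,n^3\}$, while $\sigma_{3j}(n)=\sum_{a\in A}a^{j}$ over $A:=\{d^3:d\mid n\}$; one checks that $A=B$ exactly when $n=p^3$, and that both sets always share their least element $1$ and greatest element $n^3$.

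The heart of the matter is the ``only if'' direction, where I would show that for $n\ge 2$ not a cube of a prime the ratio
$$R_j:=\frac{\sigma_{3j}(n)}{T_j(n)}=\frac{\sum_{a\in A}a^{j}}{\sum_{b\in B}b^{j}}$$
is injective on the odd integers $j\ge 1$; since $a_n=T_k(n)T_\ell(n)\,(R_k-R_\ell)$ and $T>0$, this yields $a_n\ne 0$. The natural route is to prove $R_j$ strictly monotonic on $[1,\infty)$. Writing $L=\log n$ and normalizing by $n^{3j/2}$, both numerator and denominator become sums of hyperbolic cosines that are even in $j$, share the top frequency $\tfrac32 L$, and tend to a common value, so $R_j\to 1$; differentiating, $\tfrac{d}{dj}\log R_j$ is the difference of the means of $\log$ under the tilted probability measures $a^{j}/\sigma_{3j}(n)$ on $A$ and $b^{j}/T_j(n)$ on $B$. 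I would establish that this difference has constant sign for $j\ge 1$, which reduces to an elementary but delicate inequality comparing the divisor multiset $A$ with the geometric progression $B$.

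The main obstacle is precisely this final inequality. The sign of $R_k-R_\ell$ is \emph{not} uniform in $n$: for $n$ prime one finds $R_j<1$ increasing, whereas for $n=16$ one finds $R_j>1$ decreasing, and for $n=pq$ the ratio $R_j$ is only eventually monotonic, so the argument must confirm that its unique critical point lies at $j\le 1$. No crude termwise sign analysis suffices, and one must genuinely exploit the arithmetic of $A$ against $B$. It is here that the exponent $3$ enters: the comparison forcing monotonicity on $[1,\infty)$ rests on inequalities which, as noted in the remark following Theorem~\ref{thm:primes-cubed}, fail once the cube is replaced by a fifth or higher odd power. I would therefore isolate and prove the needed inequality in the cube case, and indicate explicitly how it breaks down in general.
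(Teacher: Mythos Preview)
Your setup and the ``if'' direction are correct and coincide with the paper's Lemma~\ref{lem:zero}. The genuine gap is in the ``only if'' direction: you reduce everything to the claim that $R_j=\sigma_{3j}(n)/T_j(n)$ is strictly monotone on $[1,\infty)$, but you do not prove this. You explicitly label it ``the main obstacle'' and only promise to ``isolate and prove the needed inequality in the cube case.'' That inequality \emph{is} the theorem; without it the proposal is an outline, not a proof. Your own examples already show why the derivative approach is delicate: for $n=pq$ the function $R_j$ has $R_0=R_\infty=1$, so it is certainly not globally monotone, and establishing that its unique turning point lies at $j\le 1$ for every such $n$ is not obviously easier than the theorem itself. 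Nothing in the tilted-measure formulation you sketch supplies this.

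The paper bypasses monotonicity in $j$ entirely. It fixes $(k,\ell)$ and instead splits on $n$: writing $a_{k,\ell}(n)=\sum_{d\mid n}a_{k,\ell}(n,d)$ with $a_{k,\ell}(n,d)=T_\ell(n)d^{3k}-T_k(n)d^{3\ell}$, it first proves a sign lemma that $a_{k,\ell}(n,d)<0$ only at $d=n$. Then it shows (i) if some prime $p\mid n$ has $p^3<n$, the two terms at $d=n$ and $d=n/p$ already sum to a positive quantity, forcing $a_{k,\ell}(n)>0$; (ii) if $n=p^3$ then $a_{k,\ell}(n)=0$ as you observed; (iii) if $p^3>n$ for every prime $p\mid n$, then $n\in\{p,p^2,pp'\}$ and explicit upper and lower bounds on the $a_{k,\ell}(n,d)$, together with a calculus argument in an auxiliary one-variable function, force $a_{k,\ell}(n)<0$. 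This trichotomy is logically equivalent to the monotonicity you seek (indeed it determines the direction of monotonicity from the size of the smallest prime factor), but the paper's estimates are concrete, case-by-case, and do not pass through any differentiability or probabilistic reformulation. If you want to salvage your approach you would need to supply an actual proof of the monotonicity claim, and it is not clear that this is any shorter than the paper's case analysis.
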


We require a number of lemmas to establish this result. We define, for $d \mid n$,
\begin{align} \label{def:aklnd}
    a_{k,\ell}(n,d) := (n^{3\ell} + n^{2\ell} + n^{\ell} + 1) \times d^{3k} - (n^{3k} + n^{2k} + n^{k} + 1) \times d^{3\ell}.
\end{align}

We let $a_{k,\ell}(n)$ be the $n$th Fourier coefficient of $g_{k,\ell}$, which is given by
\begin{align} \label{def:aklnn}
    a_{k,\ell}(n) = \sum_{d \mid n} a_{k,\ell}(n,d) = (n^{3\ell} + n^{2\ell} + n^{\ell} + 1) \times \sigma_{3k}(n) - (n^{3k} + n^{2k} + n^{k} + 1) \times \sigma_{3\ell}(n).
\end{align}

\pagebreak
\begin{lemma} \label{lem:asign}
    For all $n \geq 2$ and $d \mid n$, we have that $a_{k,\ell}(n,d) < 0$ if and only if $d = n$.
\end{lemma}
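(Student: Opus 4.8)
The plan is to reduce the two-sided sign question to the behavior of a single monotonic function of $d$. First I would factor out the common power $d^{3k}$, writing
\[
    a_{k,\ell}(n,d) = d^{3k}\Bigl[(n^{3\ell} + n^{2\ell} + n^\ell + 1) - (n^{3k} + n^{2k} + n^k + 1)\,d^{3(\ell - k)}\Bigr].
\]
Since $d^{3k} > 0$, the sign of $a_{k,\ell}(n,d)$ is governed entirely by the bracketed quantity $F(d) := A - B\,d^{3m}$, where $A := n^{3\ell}+n^{2\ell}+n^\ell+1 > 0$, $B := n^{3k}+n^{2k}+n^k+1 > 0$, and $m := \ell - k \geq 1$. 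As a function of the positive real variable $d$, $F$ is strictly decreasing, so it has a unique positive root $d^\ast = (A/B)^{1/3m}$, with $F(d) > 0$ for $d < d^\ast$ and $F(d) < 0$ for $d > d^\ast$. The lemma then follows once I show that every proper divisor lies to the left of $d^\ast$ while $n$ itself lies to its right.

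The two estimates I need are $F(n) < 0$ and $F(n/2) > 0$; the point is that any proper divisor $d \mid n$ satisfies $d \leq n/2$, so $F(d) \geq F(n/2) > 0$ by monotonicity. For $F(n) < 0$ I would clear the power and compare $n^{3m}B = n^{3\ell} + n^{3\ell - k} + n^{3\ell - 2k} + n^{3m}$ against $A = n^{3\ell} + n^{2\ell} + n^\ell + 1$ term by term; the leading $n^{3\ell}$ cancels, and $\ell > k$ forces $3\ell - k > 2\ell$ and $3\ell - 2k > \ell$, so the surviving terms of $n^{3m}B$ strictly dominate those of $A$, giving $F(n) < 0$. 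For $F(n/2) > 0$ I would bound $(n/2)^{3m} B = 2^{-3m}\bigl(n^{3\ell} + n^{3\ell-k} + n^{3\ell-2k} + n^{3m}\bigr)$ from above: each of the four summands is at most $n^{3\ell}$, so the whole expression is at most $4\cdot 2^{-3m} n^{3\ell} \leq n^{3\ell}/2 < A$, where I use only $m \geq 1$.

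Assembling these, $F(n) < 0$ gives $a_{k,\ell}(n,n) < 0$, while $F(d) > 0$ for every proper divisor $d$ gives $a_{k,\ell}(n,d) > 0$ there; hence $a_{k,\ell}(n,d) < 0$ if and only if $d = n$. I expect the only genuinely delicate point to be the passage through $d \leq n/2$: this is where the arithmetic hypothesis $d \mid n$ enters, since a generic real $d < n$ could exceed $d^\ast$, and it is exactly what pins the root $d^\ast$ into the interval $(n/2, n)$ and makes the equivalence clean. Everything else is a routine monomial comparison, with the gap $\ell > k$ doing the work in both inequalities.
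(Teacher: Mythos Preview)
Your argument is correct and follows essentially the same strategy as the paper's proof: both split into the cases $d=n$ and $d\le n/2$ (the latter using that any proper divisor of $n$ is at most $n/2$), factor out $d^{3k}$, and verify the sign at these two endpoints. Your bookkeeping via the monotone function $F(d)=A-Bd^{3m}$ is a bit more streamlined than the paper's manipulations, and your endpoint estimate $4\cdot 2^{-3m}n^{3\ell}\le n^{3\ell}/2<A$ is slightly slicker than the paper's bound through the auxiliary inequality $1<\frac{n^{j}-n^{-3j}}{n^{j}-1}<2$; note also that since $k,\ell$ are both odd you actually have $m=\ell-k\ge 2$, which only makes your estimate more comfortable.
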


\begin{proof}
    We first simplify and extract powers of $n$ to obtain
    \begin{align*}
        a_{k,\ell}(n,n) &= (n^{3\ell} + n^{2\ell} + n^{\ell} + 1) \times n^{3k} - (n^{3k} + n^{2k} + n^{k} + 1) \times n^{3\ell} \\
        &= (n^{2\ell} + n^{\ell} + 1) \times n^{3k} - (n^{2k} + n^{k} + 1) \times n^{3\ell} \\
        &= n^{3(k+\ell)} (n^{-\ell} - n^{-k} + n^{-2\ell} - n^{-2k} + n^{-3\ell} - n^{-3k}).
    \end{align*}
    Since $\ell > k$, $n^{-j\ell} - n^{-jk} < 0$ for $j > 0$. Thus, $a_{k,\ell}(n,n) < 0$.

    Conversely, if $d \leq n/2$, we may bound $a_{k,\ell}(n,d)$ from below and simplify to obtain
    \begin{align*}
        a_{k,\ell}(n,d) &\geq d^{3k} \left((n^{3\ell} + n^{2\ell} + n^{\ell} + 1) - (n^{3k} + n^{2k} + n^{k} + 1) \times (n/2)^{3(\ell - k)} \right) \\
        &= d^{3k}n^{3\ell}\left( \frac{n^{\ell} - n^{-3\ell}}{n^{\ell} - 1} - \frac{n^{k} - n^{-3k}}{n^k - 1} \times \frac{1}{2^{3(\ell - k)}} \right).
    \end{align*}
    We have, for all $n \geq 2$ and $m \geq 1$,
    \begin{align*}
        1 < \frac{n^m - n^{-3m}}{n^m - 1} < 2,
    \end{align*}
    whereby
    \begin{align*}
         d^{3k}n^{3\ell}\left( \frac{n^{\ell} - n^{-3\ell}}{n^{\ell} - 1} - \frac{n^{k} - n^{-3k}}{n^k - 1} \times \frac{1}{2^{3(\ell - k)}} \right) &\geq d^{3k}n^{3\ell} \left(1  - \frac{1}{2^{3(\ell - k) - 1}} \right) \geq 1.
    \end{align*}
\end{proof}

\begin{lemma} \label{lem:abound}
    For all $n \geq 2$ and $1 \leq d < n$, we have
    \begin{align*}
        a_{k,\ell}(n,d) < n^{3\ell}d^{3k}\left(1 + \frac{1}{n^{\ell} - 1}\right).
    \end{align*}
    Furthermore, for all $n \geq 2$,
    \begin{align*}
        \abs{a_{k,\ell}(n,n)} > n^{2k+3\ell}\left(1 - \frac{1}{n^2}\right).
    \end{align*}
\end{lemma}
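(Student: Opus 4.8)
The lemma makes two independent claims, and the plan is to dispatch them separately using the explicit expressions already recorded.

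For the upper bound, the idea is simply to discard the subtracted term. Since $1 \leq d < n$ and $k,\ell \geq 0$, the quantity $(n^{3k} + n^{2k} + n^{k} + 1)\,d^{3\ell}$ is strictly positive, so from the definition \eqref{def:aklnd} one immediately gets
\[
a_{k,\ell}(n,d) < (n^{3\ell} + n^{2\ell} + n^{\ell} + 1)\,d^{3k}.
\]
It then remains only to bound the first factor. Factoring out $n^{3\ell}$ rewrites it as $n^{3\ell}\bigl(1 + n^{-\ell} + n^{-2\ell} + n^{-3\ell}\bigr)$, and comparing this finite sum to the full geometric series gives $1 + n^{-\ell} + n^{-2\ell} + n^{-3\ell} < \sum_{j \geq 0} n^{-j\ell} = 1 + \tfrac{1}{n^{\ell} - 1}$, which is exactly the claimed constant. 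This step is routine.

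For the lower bound, I would reuse the factored form of $a_{k,\ell}(n,n)$ already computed in the proof of Lemma \ref{lem:asign}, namely
\[
a_{k,\ell}(n,n) = n^{3(k+\ell)}\bigl(n^{-\ell} - n^{-k} + n^{-2\ell} - n^{-2k} + n^{-3\ell} - n^{-3k}\bigr),
\]
which is negative by that lemma. Taking absolute values and factoring out a single power $n^{-k}$ (and using $3(k+\ell) - k = 2k + 3\ell$) produces
\[
\abs{a_{k,\ell}(n,n)} = n^{2k + 3\ell}\bigl(1 + n^{-k} + n^{-2k} - n^{k-\ell} - n^{k-2\ell} - n^{k-3\ell}\bigr),
\]
so the task reduces to showing that the parenthetical factor exceeds $1 - n^{-2}$, equivalently that $n^{-k} + n^{-2k} + n^{-2} > n^{k-\ell} + n^{k-2\ell} + n^{k-3\ell}$.

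The cleanest route, and the one delicate point, is a term-by-term pairing of the two sides rather than a single crude geometric estimate. Since $k$ and $\ell$ are both odd with $\ell > k$, their difference is even and at least $2$, so $k - \ell \leq -2$ yields $n^{k-\ell} \leq n^{-2}$; meanwhile $\ell > k$ gives directly $n^{k-2\ell} < n^{-k}$ and $n^{k-3\ell} < n^{-2k}$. Summing these three comparisons (the last two strict) establishes the strict inequality and hence the bound. The main thing to get right is identifying this grouping, since a blunter estimate is not quite sharp enough near $\ell = k+2$; once the pairing is in hand, each comparison is an elementary inequality between exponents.
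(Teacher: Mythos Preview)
Your proof is correct. For the first inequality your argument matches the paper's in spirit---drop the negative term and bound the remaining geometric sum by $1+\tfrac{1}{n^\ell-1}$---with only a cosmetic difference (the paper first replaces $d^{3\ell}$ by $d^{3k}$ before discarding, which gains nothing).

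For the second inequality your route is genuinely different and considerably shorter. The paper rewrites $n^{-3(k+\ell)}\abs{a_{k,\ell}(n,n)}$ as $\frac{1-n^{-3k}}{n^k-1}-\frac{1-n^{-3\ell}}{n^\ell-1}$, then uses $\ell\geq k+2$ to replace the second fraction by $\tfrac{1}{n^{k+2}-1}$ and proceeds through a chain of five auxiliary inequalities (equations \eqref{eq:aklnn}--\eqref{eq:aklnn5}) comparing various combinations of $\tfrac{1}{n^k-1}$, $\tfrac{1}{n^{3k}}$, $\tfrac{1}{n^{k+2}-1}$, etc., before arriving at $\tfrac{1}{n^k}(1-n^{-2})$. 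Your approach---factor out $n^{2k+3\ell}$ directly from the expression already computed in Lemma~\ref{lem:asign} and then pair terms via $n^{k-\ell}\leq n^{-2}$, $n^{k-2\ell}<n^{-k}$, $n^{k-3\ell}<n^{-2k}$---reaches the same conclusion in three one-line comparisons. The paper's method is more systematic in the sense that it works through closed-form rational expressions, but your term-by-term pairing is cleaner and makes the role of the parity gap $\ell\geq k+2$ transparent at the single place it is needed.
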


\begin{proof}
    For $d < n$, we have the bound
    \begin{align*}
        a_{k,\ell}(n,d) &\leq (n^{3\ell} + n^{2\ell} + n^{\ell} + 1) \times d^{3k} - (n^{3k} + n^{2k} + n^{k} + 1) \times d^{3\ell} \\
        &\leq (n^{3\ell} + n^{2\ell} + n^{\ell} + 1) \times d^{3k} - (n^{3k} + n^{2k} + n^{k} + 1) \times d^{3k},
    \end{align*}
   where the second inequality follows since $\ell > k$. Extracting the common factor of $d^{3k}$, we have
    \begin{align*}
        d^{3k}(n^{3\ell} - n^{3k} + n^{2\ell} - n^{2k} + n^{\ell} - n^k) &\leq d^{3k}(n^{3\ell} + n^{2\ell} + n^{\ell}) = d^{3k}n^{3\ell}(1 + n^{-\ell} + n^{-2\ell}).
    \end{align*}
    Writing
    \begin{align*}
        1 + n^{-\ell} + n^{-2\ell} &= \frac{1 - n^{-3\ell}}{1 - n^{-\ell}} < \frac{1}{1 - n^{-\ell}} = \frac{n^{\ell}}{n^{\ell} - 1} = 1 + \frac{1}{n^{\ell} - 1},
    \end{align*}
    we obtain the first part of our result.

    For $d = n$, we find through straightforward algebraic manipulation that
    \begin{align*}
        \frac{1}{n^{3(k+\ell)}}\abs{a_{k,\ell}(n,n)} &= (n^{3k} + n^{2k} + n^{k} + 1) \times n^{-3k} - (n^{3\ell} + n^{2\ell} + n^{\ell} + 1) \times n^{-3\ell}  \\
        &= (n^{2k} + n^{k} + 1) \times n^{-3k} - (n^{2\ell} + n^{\ell} + 1) \times n^{-3\ell} \\
        &= \frac{n^{3k} - 1}{n^k - 1} \times n^{-3k} - \frac{n^{3\ell} - 1}{n^{\ell} - 1} \times n^{-3\ell},
    \end{align*}
    which we simplify further to
    \begin{align} \label{eq:aklnn1}
        \frac{1}{n^{3(k+\ell)}}\abs{a_{k,\ell}(n,n)} &= \frac{1 - n^{-3k}}{n^k - 1} - \frac{1 - n^{-3\ell }}{n^{\ell} - 1}.
    \end{align}
    Since $\ell \geq k + 2$, we may write
    \begin{align*}
        \frac{1 - n^{-3\ell }}{n^{\ell} - 1} < \frac{1}{n^{\ell} - 1} \leq \frac{1}{n^{k+2} - 1}.
    \end{align*}
    Inserting this bound into \eqref{eq:aklnn1}, we have
    \begin{align}  \label{eq:aklnn}
        \frac{1}{n^{3(k+\ell)}}\abs{a_{k,\ell}(n,n)} &> \frac{1 - n^{-3k}}{n^k - 1} - \frac{1}{n^{k+2} - 1} = \frac{1}{n^k - 1} - \frac{1}{n^{4k} - n^{3k}} - \frac{1}{n^{k+2} - 1}.
    \end{align}
    Since $n^{4k} - n^{3k} > n^{3k}$ for all $n \geq 2$, we have
    \begin{align} \label{eq:aklnn2}
        \frac{1}{n^k - 1} - \frac{1}{n^{4k} - n^{3k}} - \frac{1}{n^{k+2} - 1} > \frac{1}{n^k - 1} - \frac{1}{n^{3k}} - \frac{1}{n^{k+2} - 1}
    \end{align}
    Combining terms in the right-hand side of \eqref{eq:aklnn2}, we obtain
    \begin{align} \label{eq:aklnn3}
        \frac{1}{n^{3k}} + \frac{1}{n^{k+2} - 1} &= \frac{n^{3k} + n^{k+2} - 1}{n^{3k}(n^{k+2} - 1)} = \frac{1 + n^{-2k+2} - n^{-3k}}{n^{k+2} - 1}.
    \end{align}
    Meanwhile, we compute
    \begin{align} \label{eq:aklnn4}
        \frac{1}{n^k - 1} - \left(\frac{1}{n^k} - \frac{1}{n^{k+2}}\right) &= \frac{n^{k+2} - (n^k - 1)(n^2 - 1)}{n^{k+2}(n^k - 1)} = \frac{1 + n^{-k+2} - n^{-k}}{n^{k+2} - n^2}.
    \end{align}
    We now compare \eqref{eq:aklnn3} and \eqref{eq:aklnn4}, finding that
    \begin{align} \label{eq:aklnn5}
        \frac{1 + n^{-2k+2} - n^{-3k}}{n^{k+2} - 1} < \frac{1 + n^{-2k+2}}{n^{k+2} - 1} < \frac{1 + n^{-2k+2}}{n^{k+2} - n^2} < \frac{1 + n^{-k+2} - n^{-k}}{n^{k+2} - n^2},
    \end{align}
    where the rightmost inequality follows from $n^2 < n^{k+2} - n^k$ for all $n \geq 2$ and $k \geq 1$.

    Thus, applying \eqref{eq:aklnn2} and \eqref{eq:aklnn5} to \eqref{eq:aklnn}, we have
    \begin{align*}
        \frac{1}{n^{3(k+\ell)}}\abs{a_{k,\ell}(n,n)} &> \frac{1}{n^k - 1} - \frac{1}{n^{3k}} - \frac{1}{n^{k+2} - 1} \\
        &> \frac{1}{n^k - 1} - \left(\frac{1}{n^k - 1} - \left(\frac{1}{n^k} - \frac{1}{n^{k+2}}\right)\right) \\
        &= \frac{1}{n^k}\left(1 - \frac{1}{n^2}\right),
    \end{align*}
    yielding our result.
\end{proof}

\pagebreak
We now obtain a condition on $n$ which dictates the sign of $a_{k,\ell}(n)$.

\begin{lemma} \label{lem:pos}
    If $n > p^3$ for some $p \mid n$ prime, then $a_{k,\ell}(n) > 0$.
\end{lemma}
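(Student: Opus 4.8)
The plan is to exploit the sign structure established in Lemma~\ref{lem:asign}: since $a_{k,\ell}(n) = \sum_{d \mid n} a_{k,\ell}(n,d)$ and the unique negative summand is the one with $d = n$, it suffices to exhibit a single proper divisor whose positive contribution already outweighs $\lvert a_{k,\ell}(n,n)\rvert$. I would fix the prime $p \mid n$ furnished by the hypothesis, so that $n > p^3$, and take the proper divisor $d_0 := n/p$, which satisfies $1 < d_0 < n$ and hence $a_{k,\ell}(n,d_0) > 0$ by Lemma~\ref{lem:asign}. Every remaining summand (those with $d \mid n$ and $d \neq n$) is likewise positive, so the lemma follows once I show the two-term inequality $a_{k,\ell}(n,n/p) + a_{k,\ell}(n,n) > 0$.

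First I would compute this two-term sum explicitly. Writing $A := n^{3\ell}+n^{2\ell}+n^{\ell}+1$ and $B := n^{3k}+n^{2k}+n^{k}+1$, the definition \eqref{def:aklnd} gives $a_{k,\ell}(n,d) = A\,d^{3k} - B\,d^{3\ell}$, so collecting the $d = n/p$ and $d = n$ terms and factoring out $n^{3(k+\ell)}$ yields
\[
a_{k,\ell}(n,n/p) + a_{k,\ell}(n,n) = n^{3(k+\ell)}\bigl(\alpha(1 + p^{-3k}) - \beta(1 + p^{-3\ell})\bigr),
\]
where $\alpha := 1 + n^{-\ell} + n^{-2\ell} + n^{-3\ell}$ and $\beta := 1 + n^{-k} + n^{-2k} + n^{-3k}$. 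Thus the problem reduces to the scalar inequality $\alpha(1+p^{-3k}) > \beta(1+p^{-3\ell})$.

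To handle this, I would expand the difference $\alpha(1+p^{-3k}) - \beta(1+p^{-3\ell})$ and regroup, pairing the leading positive term $p^{-3k}$ against $-n^{-k}$ rather than against $-p^{-3\ell}$. The governing quantity is then $p^{-3k} - n^{-k}$, which is positive precisely because $n > p^3$; in fact, since $p \mid n$ forces $n \geq p^3 + p$, one obtains the stronger bound $p^{-3k} - n^{-k} - n^{-2k} > 0$, which absorbs the dominant negative correction $n^{-2k}$. The residual terms — such as $n^{-\ell} - p^{-3\ell}$ and $n^{-3\ell} - n^{-3k}$, together with the cross-terms involving $p^{-3k}$ and $p^{-3\ell}$ — are all of size at most $p^{-3k-6}$ or $p^{-6k}$, where the hypothesis $\ell \geq k+2$ is exactly what guarantees the $\ell$-dependent terms are this small. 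A short estimate then shows the positive quantity dominates their sum for every $p \geq 2$ and $k \geq 1$.

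The main obstacle is the tightness of the reduced inequality: the factor $n^{3(k+\ell)}$ reflects a complete cancellation of the leading terms, and the inequality degenerates to equality exactly at $n = p^3$. Consequently no crude bound survives — one must isolate the first non-vanishing order $p^{-3k} - n^{-k}$ and verify that the accumulated lower-order corrections remain strictly smaller, checking that the constants still hold in the extreme case $p = 2$, $k = 1$, $n = p^3 + p$, where the margin is slimmest.
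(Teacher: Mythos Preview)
Your proposal is correct and follows essentially the same approach as the paper: both reduce via Lemma~\ref{lem:asign} to the two-term inequality $a_{k,\ell}(n,n/p)+a_{k,\ell}(n,n)>0$, exploit the divisibility bound $n\ge p^3+p$, and verify the inequality by isolating the leading term (the paper packages this as showing the ratio function $h_1(x)$ exceeds $1$ at $x=p^3+p$ via monotonicity, whereas you expand the normalized difference directly, but the substance is the same).
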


\begin{proof}
    Let $p \mid n$ be such that $n > p^3$. We evaluate via \eqref{def:aklnd}
    \begin{align} \label{eq:posrat}
        a_{k,\ell}(n,n/p) + a_{k,\ell}(n,n) &= \frac{(n^{4\ell} - 1)(p^{3k} + 1)}{n^{\ell} - 1} \times \frac{n^{3k}}{p^{3k}} - \frac{(n^{4k} - 1)(p^{3\ell} + 1)}{n^k - 1} \times \frac{n^{3\ell}}{p^{3\ell}}.
    \end{align}
    We now bound the ratio of the terms in the right-hand side of \eqref{eq:posrat}. Since $\ell > k$, we have $n^{4\ell} - 1 \geq n^{4\ell} - n^{4(\ell -k)}$ and thus $(n^{4\ell} - 1)/(n^{4k} - 1) \geq n^{4(\ell - k)}$. Hence, we find through straightforward algebraic manipulation that
    \begin{align} \label{eq:h13}
        \frac{p^{3(\ell - k)}(n^{4\ell} - 1)(p^{3k} + 1) (n^k - 1)}{n^{3(\ell - k)}(n^{4k} - 1)(p^{3\ell} + 1)(n^{\ell} - 1)} &\geq (np^3)^{\ell - k} \times \frac{(p^{3k} + 1) (n^k - 1)}{(p^{3\ell} + 1)(n^{\ell} - 1)}.
    \end{align}
    Distributing the factors in $(np^3)^{\ell - k}$, we obtain
    \begin{align} \label{eq:h12}
        (np^3)^{\ell - k} \times \frac{(p^{3k} + 1) (n^k - 1)}{(p^{3\ell} + 1)(n^{\ell} - 1)} &= \frac{p^{3\ell} + p^{3(\ell - k)}}{p^{3\ell} + 1} \times \frac{n^k - 1}{n^k - n^{k - \ell}} > \frac{p^{3\ell} + p^{3(\ell - k)}}{p^{3\ell} + 1} \times \frac{n^k - 1}{n^k}.
    \end{align}
    
    Now, let
    \begin{align*}
        h_1(x) := \frac{p^{3\ell} + p^{3(\ell - k)}}{p^{3\ell} + 1} \times \frac{x^k - 1}{x^k}, \text{ for } x \geq 1,
    \end{align*}
    so that $h_1(n)$ is a lower bound for the left hand side of \eqref{eq:h13} thanks to \eqref{eq:h12}. It is clear that $h_1(x)$ is an increasing function, and thus it suffices to show that $h_1(p^3 + p) > 1$ to deduce that $h_1(n) > 1$. We thus calculate and expand
    \begin{align*}
        h_1(p^3 + p) &= \frac{p^{3\ell} + p^{3(\ell - k)}}{p^{3\ell} + 1} \times \frac{(p^3 + p)^{k} - 1}{(p^3 + p)^{k}} \\
        &= \left( 1 + \frac{p^{3(\ell - k)} - 1}{p^{3\ell} + 1} \right)\left(1 - \frac{1}{(p^3 + p)^{k}}\right) \\
        &= 1 - \frac{1}{(p^3 + p)^{k}} + \frac{p^{3(\ell-k)} - 1}{p^{3\ell} + 1} - \frac{p^{3(\ell-k)}-1}{(p^{3\ell} + 1)(p^3 + p)^{k}}.
    \end{align*}
    We now group the terms above to obtain
    \begin{align} \label{eq:h14}
        h_1(p^3 + p) &= 1 + \frac{p^{3(\ell - k)}}{(p^{3\ell} + 1)(p^3 + p)^k}\left( (p^3 + p)^k\left(1 - p^{3(k-\ell)}\right) - p^{3k} - 1 \right)
    \end{align}
    and wish to show that this value is positive. We first have by straightforward algebraic manipulation that
    \begin{align*}
        (p^3 + p)^k\left(1 - p^{3(k-\ell)}\right) - p^{3k} - 1 &= ((p^3 + p)^k - p^{3k})\left(1 - p^{3(k-\ell)}\right) - p^{3(2k - \ell)} - 1.
    \end{align*}
    Thanks to the binomial theorem,
    \begin{align*}
        (p^3 + p)^k - p^{3k} > kp^{3(k - 1) + 1},
    \end{align*}
    whereby
    \begin{align*}
         ((p^3 + p)^k - p^{3k})\left(1 - p^{3(k-\ell)}\right) - p^{3(2k - \ell)} - 1 &> k\left(1 - p^{3(k - \ell)} \right)p^{3(k - 1) + 1} - p^{3(2k - \ell)} - 1.
    \end{align*}
    This equates to
    \begin{align} \label{eq:h15}
         k\left(1 - p^{3(k - \ell)} \right)p^{3(k - 1) + 1} - p^{3(2k - \ell)} - 1 &= p^{3(2k - \ell)}\left(k\left(p^{3(\ell-k-1)+1} - p^{-2}\right) - 1\right) - 1.
    \end{align}
    We then observe that
    \begin{align*}
        p^{3(\ell-k-1)+1} - p^{-2} - k^{-1} > p^{3(\ell - 2k)}
    \end{align*}
    since $k \geq 1$ and $p \geq 2$. This grants
    \begin{align*}
        p^{3(2k - \ell)}\left(k\left(p^{3(\ell-k-1)+1} - p^{-2}\right) - 1\right) > 1,
    \end{align*}
    which combined with \eqref{eq:h14} and \eqref{eq:h15} completes the proof that $h_1(n) > 1$. Since $a_{k,\ell}(n) \geq a_{k,\ell}(n,n/p) + a_{k,\ell}(n,n)$ by Lemma \ref{lem:asign}, we obtain our result.
\end{proof}

\begin{lemma} \label{lem:zero}
    If $n = p^3$ for some prime $p$, then $a_{k,\ell}(n) = 0$.
\end{lemma}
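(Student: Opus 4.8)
The plan is to substitute $n = p^3$ directly into the closed form \eqref{def:aklnn} for $a_{k,\ell}(n)$ and exhibit an exact cancellation. The key observation is that when $n$ is the cube of a prime, the divisor sum $\sigma_{3k}(n)$ degenerates into precisely the polynomial factor $n^{3k} + n^{2k} + n^k + 1$ that multiplies $\sigma_{3\ell}(n)$ in \eqref{def:aklnn}, and symmetrically with $\ell$ in place of $k$.

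Concretely, the first step is to note that the divisors of $p^3$ are exactly $1, p, p^2, p^3$, so that
\begin{align*}
    \sigma_{3k}(p^3) = \sum_{d \mid p^3} d^{3k} = 1 + p^{3k} + p^{6k} + p^{9k}.
\end{align*}
Since $n = p^3$, we have $p^{3k} = n^k$, $p^{6k} = n^{2k}$, and $p^{9k} = n^{3k}$, whence $\sigma_{3k}(n) = n^{3k} + n^{2k} + n^k + 1$. The identical computation with $\ell$ in place of $k$ gives $\sigma_{3\ell}(n) = n^{3\ell} + n^{2\ell} + n^\ell + 1$.

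The final step is to substitute these two identities into \eqref{def:aklnn}: the first product becomes $(n^{3\ell} + n^{2\ell} + n^\ell + 1)(n^{3k} + n^{2k} + n^k + 1)$ and the second becomes $(n^{3k} + n^{2k} + n^k + 1)(n^{3\ell} + n^{2\ell} + n^\ell + 1)$. These coincide, so their difference $a_{k,\ell}(n)$ vanishes. I expect no genuine obstacle in this direction: the entire content is the single observation that a cube of a prime has exactly four divisors whose $(3k)$th powers reproduce the factor $n^{3k} + n^{2k} + n^k + 1$, after which the conclusion follows from the manifest symmetry of \eqref{def:aklnn} under interchanging the roles of $k$ and $\ell$. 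This is the easy half of Theorem~\ref{thm:help}; the substance of that result lies in the converse, established through the sign and bound estimates of Lemmas~\ref{lem:asign}, \ref{lem:abound}, and~\ref{lem:pos}.
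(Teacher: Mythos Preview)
Your proof is correct and follows essentially the same approach as the paper: compute $\sigma_{3k}(p^3)$ and $\sigma_{3\ell}(p^3)$ explicitly from the four divisors of $p^3$, substitute into \eqref{def:aklnn}, and observe that the two products coincide. The paper writes the final expression in terms of powers of $p$ rather than rewriting $p^{3k} = n^k$, but this is a cosmetic difference only.
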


\begin{proof}
    For $s > 0$, we have
    \begin{align*}
        \sigma_s(p^3) &= p^{3s} + p^{2s} + p^s + 1,
    \end{align*}
    which substituting into \eqref{def:aklnn} grants
    \begin{align*}
        a_{k,\ell}(p^3) &= (p^{9\ell} + p^{6\ell} + p^{3\ell} + 1)(p^{9k} + p^{6k} + p^{3k} + 1) - (p^{9k} + p^{6k} + p^{3k} + 1)(p^{9\ell} + p^{6\ell} + p^{3\ell} + 1) = 0.
    \end{align*}
\end{proof}

\begin{lemma} \label{lem:neg}
    If $n < p^3$ for all $p \mid n$, then $a_{k,\ell}(n) < 0$.
\end{lemma}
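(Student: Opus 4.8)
The plan is to read Lemma~\ref{lem:neg} as the final case of a trichotomy: together with Lemmas~\ref{lem:pos} and~\ref{lem:zero} it accounts for every $n \geq 2$, since exactly one of ``$n > p^3$ for some prime $p \mid n$'', ``$n = p^3$'', and ``$n < p^3$ for all $p \mid n$'' can hold. So I would first pin down which $n$ satisfy the present hypothesis. Writing $p$ for the smallest prime factor of $n$, the condition forces $n < p^3$; since every other prime factor exceeds $p$, this bounds the number of prime factors and leaves exactly three shapes, namely $n = p$, $n = p^2$, or $n = pq$ with $p < q < p^2$ (three prime factors, a repeated factor beyond $p^2$, or a factor $p^3$ would already push $n \geq p^3$). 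This short finite list of shapes is what makes the estimate tractable.

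I would then work directly from the closed form \eqref{def:aklnn}, which says that $a_{k,\ell}(n) < 0$ holds exactly when $(n^{3\ell}+n^{2\ell}+n^{\ell}+1)\,\sigma_{3k}(n) < (n^{3k}+n^{2k}+n^{k}+1)\,\sigma_{3\ell}(n)$. By Lemma~\ref{lem:asign} the sole negative contribution to $a_{k,\ell}(n)$ is the term $d=n$, and the bound of Lemma~\ref{lem:abound} on $\abs{a_{k,\ell}(n,n)}$ is too lossy for the smallest $n$ in our list (already for $n=6$ the crude leading terms in $\ell$ do not separate), so I would retain $a_{k,\ell}(n,n)$ exactly and estimate only the positive terms. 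Substituting the multiplicative values $\sigma_{3s}(p) = 1 + p^{3s}$, $\sigma_{3s}(p^2) = 1 + p^{3s} + p^{6s}$, and $\sigma_{3s}(pq) = (1+p^{3s})(1+q^{3s})$ into the displayed inequality reduces each of the three shapes to an explicit inequality in $p$ (and $q$) whose exponents are linear in $k$ and $\ell$.

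For $n = p$ and $n = p^2$ the dominant negative monomial outweighs the positive terms comfortably, using only $\ell \geq k+2$, $k \geq 1$, and $p \geq 2$. The genuine work is the semiprime case $n = pq$. There the second-largest divisor contributes a term of size $q^{3s}$ that must be controlled against the $(pq)^{2s}$ term of $n^{3s}+n^{2s}+n^s+1$, and this is precisely where $q < p^2$ enters; it is made transparent by the factorization $(n^{3s}+n^{2s}+n^s+1) - \sigma_{3s}(pq) = (p^{2s}-q^s)(q^{2s}-p^s)$, whose two factors are positive exactly because $p^2 > q > p$. I expect the main obstacle to be this case with $q$ just below $p^2$ (already $n = 6$, $p = 2$, $q = 3$): here the leading terms in $\ell$ nearly cancel, the margin is thinnest, and it must be extracted entirely from the slack in $q < p^2$, so essentially all of the difficulty lives in this single sub-case.
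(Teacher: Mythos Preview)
Your case split into $n=p$, $n=p^2$, and $n=pq$ with $p<q<p^2$ is exactly the paper's, and your handling of the first two shapes matches its direct estimates. The divergence is in the semiprime case. Contrary to your plan, the paper does \emph{not} discard Lemma~\ref{lem:abound}; it applies it, packages the resulting upper bound into an auxiliary function $h_2(x)=(1+p^{3k}+(x/p)^{3k})(1+x^{-k})-x^{2k}(1-p^{-4})$, uses a derivative argument to show $h_2$ has a single interior critical point on $[p^2,p^3-p]$ so that endpoint checks suffice, verifies those endpoints, and then disposes of the leftover exceptional pairs (in particular $n=6$) by direct computation. The paper thus accepts the looseness of Lemma~\ref{lem:abound} and repairs it case by case.

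Your factorization $(n^{3s}+n^{2s}+n^s+1)-\sigma_{3s}(pq)=(p^{2s}-q^s)(q^{2s}-p^s)$ is correct and elegantly encodes $p<q<p^2$ through the positivity of both factors, but as stated it does not close the argument. What you must prove is the two-index comparison $(n^{3\ell}+n^{2\ell}+n^\ell+1)\sigma_{3k}(n)<(n^{3k}+n^{2k}+n^k+1)\sigma_{3\ell}(n)$, equivalently that $\sigma_{3s}(pq)/(n^{3s}+n^{2s}+n^s+1)$ strictly increases as $s$ passes from $k$ to $\ell$; the sign of a single difference $A_s-B_s$ does not deliver this monotonicity. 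You correctly locate the pinch at $q$ just under $p^2$ and flag $n=6$ as the prototype, but your proposal stops exactly where the real estimate has to start; this is the step on which the paper spends nearly all of its effort, and your outline currently contains no mechanism to carry it out.
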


\begin{proof}
    Since $n \leq p^3 - p$ for $p$ the smallest prime dividing $n$, we have three cases. \\

    \noindent\textbf{Case 1:} $n = p$. We first expand
    \begin{align*}
        a_{k,\ell}(p) &= (p^{3\ell} + p^{2\ell} + p^{\ell} + 1)(p^{3k} + 1) - (p^{3k} + p^{2k} + p^k + 1)(p^{3\ell} + 1) \\
        &= p^{3k + 2\ell} + p^{3k+\ell} + p^{2\ell} + p^{\ell} - p^{3\ell+2k} - p^{3\ell+k} - p^{2k} - p^k.
    \end{align*}
    Extracting the largest powers of $p$, we find that
    \begin{align*}
        a_{k,\ell}(p) &= p^{2k+2\ell}(p^{k} + p^{-2k} + p^{-\ell-2k} - p^{\ell} - p^{-2\ell} - p^{-k-2\ell})).
    \end{align*}
    All negative powers of $p$ are bounded above by $1$, so we obtain,
    \begin{align*}
        a_{k,\ell}(p) &\leq p^{2k+2\ell}(p^{k} - p^{\ell} + p^{-2k} + p^{-\ell-2k}) \leq p^{2k+2\ell}(p^k - p^{\ell} + 2) \leq p^{2k+2\ell}(2^1 - 2^3 + 2) < 0.
    \end{align*}

    \noindent\textbf{Case 2:} $n = p^2$. As in the previous case, we expand out $a_{k,\ell}(p^2)$ and collect the largest powers of $p$. Discarding negative addends and bounding negative powers of $p$ by $1$, we obtain
    \begin{align*}
        a_{k,\ell}(p^2) &= (p^{6\ell} + p^{4\ell} + p^{2\ell} + 1)(p^{6k} + p^{3k} + 1) - (p^{6k} + p^{4k} + p^{2k} + 1)(p^{6\ell} + p^{3\ell} + 1) \\
        &\leq p^{4k+4\ell}(p^{2k} - p^{2\ell} + 10) \\
        &\leq p^{4k+4\ell}(2^2 - 2^6 + 10) \\
        &< 0.
    \end{align*}

    \noindent\textbf{Case 3:} $n = pp'$ with $p' > p$ prime. Let
    \begin{align*}
        h_2(x) := (1 + p^{3k} + (x/p)^{3k})\left(1 + \frac{1}{x^k}\right) - x^{2k}\left(1 - \frac{1}{p^4}\right).
    \end{align*}
    We first write $a_{k,\ell}(n)$ via \eqref{def:aklnn} as
    \begin{align*}
        a_{k,\ell}(n) &= a_{k,\ell}(n,1) + a_{k,\ell}(n,p) + a_{k,\ell}(n,n/p) + a_{k,\ell}(n,n).
    \end{align*}
    We then apply Lemma \ref{lem:abound} to find that
    \begin{align*}
        a_{k,\ell}(n) &< n^{3\ell}\left[(1 + p^{3k} + (n/p)^{3k})\left(1 + \frac{1}{n^{\ell} - 1}\right) - n^{2k}\left(1 - \frac{1}{n^2}\right)\right] \\
        &< n^{3\ell}\left[(1 + p^{3k} + (n/p)^{3k})\left(1 + \frac{1}{n^k}\right) - n^{2k}\left(1 - \frac{1}{n^2}\right)\right].
    \end{align*}
    Since $p < \sqrt{n}$, $p^4 < n^2$, and hence we obtain
    \begin{align*}
        a_{k,\ell}(n) &< n^{3\ell}\left[(1 + p^{3k} + (n/p)^{3k})\left(1 + \frac{1}{n^k}\right) - n^{2k}\left(1 - \frac{1}{p^4}\right)\right] = n^{3\ell}h_2(n).
    \end{align*}
    We then compute
    \begin{align*}
        h_2'(x) &= -\frac{kx^{-k-1}}{p^4}\left[p^{3k} + 1 - 3p^{-3k}x^{4k} - (2p^{-3k} - 2p^{-4})x^{3k}\right]
    \end{align*}
    and wish to analyze its sign. We see that
    \begin{align*}
        3p^{-3k}n^{4k} + (2p^{-3k} - 2p^{-4})n^{3k} - (p^{3k} + 1) = 0
    \end{align*}
    has exactly one positive real root $w$ by Descartes' rule of signs. Substituting $n = p^2$, we have
    \begin{align*}
        3p^{-3k}p^{8k} + (2p^{-3k} - 2p^{-4})p^{6k-4} - (p^{3k} + 1) &= -2p^{6k} + 3p^{5k} + 2p^{3k} - p^{3k} - 1 < 0
    \end{align*}
    for all $p \geq 2$. Likewise, for $n = p^3$, we have
    \begin{align*}
        3p^{-3k}p^{12k} + (2p^{-3k} - 2p^{-4})p^{9k} - (p^{3k} + 1) = 3p^{9k} - 2p^{9k-4} + 2p^{6k} - p^{3k} - 1 > 0
    \end{align*}
    for all $p \geq 2$. We thus deduce that $w \in (p^2,p^3)$, with $h_2(n)$ decreasing on $[p^2,w)$ and increasing on $(w,p^3]$. It is thus sufficient to verify that $h_2(n) < 0$ at $n = p^2$ and $n = p^3 - p$ to conclude this case.
    
    Indeed,
    \begin{align} \label{eq:h2p2}
        h_2(p^2) &= (1 + 2p^{3k})(1 + p^{-2k}) - p^{4k}(1 - p^{-4}) = -p^{4k} + p^{4k-2} + 2p^{3k} + 2p^k + 1 + p^{-2k} < 0
    \end{align}
    whenever $p \geq 3$ or $k \geq 3$. Furthermore, we have by straightforward algebraic manipulation
    \begin{align*}
        h_2(p^3 - p) &= (1 + p^{3k} + (p^2 - 1)^{3k})(1 + (p^3 - p)^{-k}) - (p^3 - p)^{2k}(1 - p^{-4}) \\
        &= -(p^3 - p)^{2k}(1 - p^{-4}) + (p^2 - 1)^{3k} + p^{3k} + (p^{3k} + 1)(p^3 - p)^{-k} + 1 + p^{-3k}(p^3 - p)^{2k},
    \end{align*}
    which we may bound simply by
    \begin{align*}
        h_2(p^3 - p) &< -(p^3 - p)^{2k}(1 - p^{-4}) + 4p^{3k} = (p^2 - 1)^{2k}(-p^{2k}(1 - p^{-4}) +5(p^2 - 1)^k) + 4p^{3k}
    \end{align*}
    since $p^{3k}$ is the largest remaining positive term. By the binomial theorem, $m^{2k} - (m^2 + 1)^k < -km^{2k-2}$, whereby
    \begin{align} \label{eq:h2p3}
        (p^2 - 1)^{2k}(-p^{2k}(1 - p^{-4}) + (p^2 - 1)^k) + 4p^{3k} < (p^2 - 1)^{2k}(-k(p^2 - 1)^{k-1} + p^{2k-4}) + 4p^{3k}.
    \end{align}
    For $k \geq 3$, we have that $(p^2 - 1)^{2k} > 4p^{3k}$ whenever $p \geq 3$ or $k \geq 13$. Since $-k(p^2 - 1)^{k-1} + p^{-2k - 4} < -1$ (again by the binomial theorem), we obtain from \eqref{eq:h2p3} that $h_2(p^3 - p) < 0$. For $k = 1$, we have
    \begin{align*}
        (p^2 - 1)^2(-1 + p^{-2}) + 4p^3 = -p^{-2}(p^2 - 1)^3 + 4p^{3} < 0
    \end{align*}
    for all $p \geq 5$. Along with \eqref{eq:h2p2}, this resolves this case for all pairs $(k,p)$ except $(k,2)$ for $1 \leq k \leq 13$.

\pagebreak
    For these remaining pairs, we have that $p = 2$ implies $p' = 3$ so that $pp' = n < p^3$. Thus, we simply compute via \eqref{def:aklnn}
    \begin{align*}
        a_{1,\ell}(6) &= 252(6^{3\ell} + 6^{2\ell} + 6^{\ell} + 1) - 259(6^{3\ell} + 3^{3\ell} + 2^{3\ell} + 1).
    \end{align*}
    We apply straightforward bounds to obtain
    \begin{align*}
        a_{1,\ell}(6) &< 252(6^{3\ell} + 6^{2\ell} + 6^{\ell} + 1) - 259 \times 6^{3\ell} < -7 \times 6^{3\ell} + 252 \times 3 \times 6^{2\ell},
    \end{align*}
    which we may readily verify is negative for $\ell \geq 3$. This completes this case and the proof of the lemma.
\end{proof}

To prove Theorem \ref{thm:primes-cubed}, we require infinitely many such $q$-series which detect the set of primes cubed.
\begin{lemma} \label{lem:inf-cubed}
    For any fixed $k \geq 1$ odd, the elements of the set $\{g_{k,\ell}~\colon \ell > k \text{ odd}\}$ are linearly independent over $\C$.
\end{lemma}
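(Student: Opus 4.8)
The plan is to exploit the weight grading on the algebra of quasimodular forms $\widetilde{M} = \C[G_2,G_4,G_6]$, under which $\widetilde{M} = \bigoplus_{w} \widetilde{M}_w$ as a $\C$-vector space, so that every element has a unique decomposition into homogeneous pieces and projection onto a fixed weight is a well-defined linear map. Since $D$ raises weight by $2$ (as recorded after \eqref{eq:ramanujan}), each term $D^jG_m$ appearing in \eqref{def:gkl} is homogeneous of weight $m+2j$. Thus, for fixed odd $k$, the form $g_{k,\ell}$ is a sum of eight homogeneous forms, whose weights are $3k+1+2j\ell$ for $j\in\{0,1,2,3\}$ (from the first block) and $3\ell+1+2jk$ for $j\in\{0,1,2,3\}$ (from the second block).

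First I would isolate the leading (highest-weight) component of $g_{k,\ell}$. Among these eight weights the largest is $3k+1+6\ell$, arising from the single term $D^{3\ell}G_{3k+1}$: indeed $3k+1+6\ell > 3\ell+1+6k$ precisely because $\ell>k$, and the remaining six weights are visibly smaller. Because $k$ is odd, $3k+1$ is even and at least $4$, so $G_{3k+1}$ is a nonzero Eisenstein series; applying $D^{3\ell}$ multiplies its $n$th Fourier coefficient $\sigma_{3k}(n)$ by $n^{3\ell}$, which leaves every coefficient nonzero for $n\geq 1$. Hence $D^{3\ell}G_{3k+1}$ is a \emph{nonzero} homogeneous form of weight $3k+1+6\ell$, and it is the \emph{unique} top-weight constituent of $g_{k,\ell}$, since no other of the eight weights equals $3k+1+6\ell$ (for instance $3\ell+1+6k = 3k+1+6\ell$ would force $k=\ell$).

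Next I would run the standard leading-term argument. Suppose $\sum_{j=1}^m c_j\,g_{k,\ell_j}=0$ for distinct odd $\ell_1<\dots<\ell_m$, all exceeding $k$. For each $j$ the maximal weight occurring in $g_{k,\ell_j}$ is $3k+1+6\ell_j$, and these strictly increase with $\ell_j$; in particular, for $j<m$ the entire form $g_{k,\ell_j}$ is supported in weights at most $3k+1+6\ell_{m-1} < 3k+1+6\ell_m$. Projecting the relation onto the weight-$(3k+1+6\ell_m)$ summand of $\widetilde{M}$ therefore kills every term except the top-weight part of $c_m\,g_{k,\ell_m}$, yielding $c_m\,D^{3\ell_m}G_{3k+1}=0$; as this form is nonzero, $c_m=0$. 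Deleting this term and inducting on $m$ forces $c_{m-1}=\dots=c_1=0$, establishing the linear independence of every finite subset.

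The argument is essentially mechanical once the grading is in hand, so I do not anticipate a serious obstacle; the only points demanding care are (i) verifying that the weights of the two blocks in \eqref{def:gkl} interleave so that $D^{3\ell}G_{3k+1}$ alone attains the maximum, which is exactly where the hypothesis $\ell>k$ is used, and (ii) confirming that the relevant Eisenstein series are nonzero, which is why we need $k$ (and $\ell$) odd so that the weights $3k+1$ and $3\ell+1$ are even and at least $4$.
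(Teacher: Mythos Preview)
Your proof is correct and follows essentially the same approach as the paper: isolate the unique top-weight constituent $D^{3\ell}G_{3k+1}\in\widetilde{M}_{3k+6\ell+1}$ of $g_{k,\ell}$ and note that in any nontrivial linear combination this term survives at the maximal weight. The paper's argument is more terse, simply asserting that all other addends have lower weight, whereas you carefully verify the weight inequalities and the nonvanishing of $D^{3\ell}G_{3k+1}$; but the underlying idea is identical.
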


\begin{proof}
    Let $k,\ell_1,\dots,\ell_j \geq 1$ be odd with $\ell_j > \cdots > \ell_1 > k$. Since $D$ raises the weight of a quasimodular form by $2$, we have that $D^{3\ell_i} G_{3k+1} \in \widetilde{M}_{3k+6\ell_i+1}$. Since all other addends in \eqref{def:gkl} have lower weight, any nontrivial linear combination $m_1g_{k,\ell_1} + \cdots + m_jg_{k,\ell_j}$ has a component of weight $3k + 6\ell_j + 1$, and hence is nonzero.
\end{proof}

\subsection{Proof of Theorem \ref{thm:help}}

Observe that for all $n \geq 2$, we have exactly three cases: $n > p^3$ for some $p \mid n$, $n < p^3$ for all $p \mid n$, and $n = p^3$. In the first two cases, $a_{k,\ell}(n) \neq 0$ by Lemmas \ref{lem:pos} and \ref{lem:neg} respectively, and in the third we have $a_{k,\ell}(n) = 0$ by Lemma \ref{lem:zero}.

\subsection{Proof of Theorem \ref{thm:primes-cubed}}

Let $\ell \geq 3$ be odd and $d = 6\ell + 3$. By Theorem \ref{thm:help}, $g_{1,\ell}$ detects the set of cubes of primes. The set $\{g_{1,\ell}~\colon \ell \geq 3 \text{ odd}\}$ thus contains infinitely many linearly independent $q$-series which detect prime cubes by Lemma \ref{lem:inf-cubed}. Since $D$ raises the weight of a quasimodular form by $2$, we have that $g_{1,\ell} \in \widetilde{M}_{\leq 6\ell+4}$, so by Theorem 19 (2) of \cite{CraigIttersumOno} we may write these series as
\begin{align*}
    g_{1,\ell} = \sum_{\abs{\vec{a}} \leq d} c_{\vec{a}}~\cU_{\vec{a}}(q)
\end{align*}
for constants $c_{\vec{a}} \in \Q$. This completes the proof of the theorem.

\section{Detecting primes in arithmetic progression}
Throughout, we fix positive integers $t \geq 2$ and $0 < r < t$ coprime to $t$, and let $\zeta_t$ be a primitive $t^{\text{th}}$ root of unity. In a similar fashion as the previous section, we utilize sums of Eisenstein series with restricted support to produce quasimodular forms which strongly detect the primes congruent to $r$ modulo $t$.

We define the series
\begin{align} \label{def:grt}
    G_{k}^{r,t}(\tau) := \frac{1}{t}\sum_{s=0}^{t-1} \zeta_t^{-rs}G_k(\tau + s/t).
\end{align}
It is clear from the definition that $G_k^{r,t}$ is a quasimodular form of weight $k$ and level $t^2$. 

\pagebreak
\begin{lemma} \label{lem:grt}
    We have that $q^{-r}G_k^{r,t}(\tau)$ is supported on powers of $q^t$. Moreover, we have
    \begin{align*}
        G_{k}^{r,t}(\tau) = \sum_{m=0}^{\infty} \sigma_{k-1}(mt + r)q^{mt+r}.
    \end{align*}
\end{lemma}

\begin{proof}
    We have by \eqref{def:g} and \eqref{def:grt}
    \begin{align} \label{eq:grt1}
        G_k^{r,t}(\tau) &= \frac{1}{t} \sum_{s=0}^{t-1} \left( - \frac{B_k\zeta_t^{-rs}}{2k} + \sum_{n=1}^{\infty} \zeta_t^{(n-r)s}\sigma_{k-1}(n)q^n\right).
    \end{align}
    It is well known that
    \begin{align*}
        \sum_{s=0}^{t-1} \zeta_t^{(n - r)s} =
        \begin{cases}
            t & t \mid n - r, \\
            0 & t \nmid n - r.
        \end{cases}
    \end{align*}
    Since $r \not\equiv 0 \pmod{t}$, the constant term of $G_k^{r,t}$ vanishes. We may then simplify \eqref{eq:grt1} to obtain
    \begin{align*}
        G_k^{r,t}(\tau) &= \sum_{\substack{n \geq 1 \\ n \equiv r \bmod{t}}} \sigma_{k-1}(n)q^n = \sum_{m=0}^{\infty} \sigma_{k-1}(mt + r)q^{mt + r}.
    \end{align*}
\end{proof}

We now establish that $G_k^{r,t}(\tau)$ can be used to construct infinitely many $q$-series which detect primes in arithmetic progression.
\begin{lemma} \label{lem:primes-ap}
    Let $k,\ell$ be non-negative odd integers with $\ell > k$. For all $n \geq 2$, the $n^{\text{th}}$ Fourier coefficient of
    \begin{align} \label{def:fklrt}
        f_{k,\ell}^{r,t} := (D^{\ell} + 1)G_{k+1} - (D^k + 1)G_{\ell + 1}^{r,t}
    \end{align}
    vanishes if and only if $n \equiv r \pmod{t}$ is prime. Furthermore, all of the coefficients of $f_{k,\ell}^{r,t}$ are non-negative.
\end{lemma}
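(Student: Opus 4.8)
The plan is to read off the $n$th Fourier coefficient explicitly and then carry out a divisor-by-divisor sign analysis, paralleling Section 3 but in a much simpler regime. First I would use $D^j \colon q^n \mapsto n^j q^n$ together with \eqref{def:g} and Lemma \ref{lem:grt} to compute, for $n \geq 1$, that the $n$th coefficient of $(D^\ell + 1)G_{k+1}$ equals $(n^\ell + 1)\sigma_k(n)$, while the $n$th coefficient of $(D^k+1)G_{\ell+1}^{r,t}$ equals $(n^k+1)\sigma_\ell(n)$ when $n \equiv r \pmod t$ and vanishes otherwise. Hence the $n$th coefficient of $f_{k,\ell}^{r,t}$ is $(n^\ell+1)\sigma_k(n)$ when $n \not\equiv r \pmod t$, and
\[
b_{k,\ell}(n) := (n^\ell + 1)\sigma_k(n) - (n^k+1)\sigma_\ell(n)
\]
when $n \equiv r \pmod t$. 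The case $n \not\equiv r$ is immediate: $(n^\ell+1)\sigma_k(n) > 0$, so such coefficients are strictly positive, never zero, and in particular nonnegative.

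The core of the argument is the analysis of $b_{k,\ell}(n)$. Following \eqref{def:aklnd}, I would set $b_{k,\ell}(n,d) := (n^\ell+1)d^k - (n^k+1)d^\ell$ for $d \mid n$, so that $b_{k,\ell}(n) = \sum_{d \mid n} b_{k,\ell}(n,d)$. The key structural observation is the exact cancellation of the extreme divisors: a direct computation gives $b_{k,\ell}(n,1) = n^\ell - n^k$ and $b_{k,\ell}(n,n) = n^k - n^\ell$, so $b_{k,\ell}(n,1) + b_{k,\ell}(n,n) = 0$ and $b_{k,\ell}(n)$ reduces to the sum over intermediate divisors, $b_{k,\ell}(n) = \sum_{1 < d < n} b_{k,\ell}(n,d)$. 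I would then prove the analogue of Lemma \ref{lem:asign}: for every proper divisor $d < n$ one has $b_{k,\ell}(n,d) > 0$. Writing $b_{k,\ell}(n,d) = d^k\left(n^\ell + 1 - d^{\ell-k}(n^k+1)\right)$, it suffices to show $d^{\ell-k}(n^k+1) < n^\ell + 1$; since every proper divisor satisfies $d \leq n/2$ and $\ell - k \geq 2$ forces $2^{\ell - k} \geq 4$, the left side is at most $(n^\ell + n^{\ell-k})/4 < n^\ell + 1$, which closes the bound.

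With these pieces in hand the conclusion is immediate. When $n$ is prime the only divisors are $1$ and $n$, so the intermediate sum is empty and $b_{k,\ell}(n) = 0$; when $n \equiv r \pmod t$ is composite it has at least one divisor with $1 < d < n$, each contributing a strictly positive term, so $b_{k,\ell}(n) > 0$. Combining with the $n \not\equiv r$ case shows the $n$th coefficient vanishes (for $n \geq 2$) precisely when $n \equiv r \pmod t$ is prime, and is nonnegative for all $n \geq 1$, yielding the "furthermore" claim. The main obstacle is the per-divisor positivity bound, but unlike the cubes case this sign analysis is genuinely easy: the relevant threshold is the trivial estimate $n \geq p$ for $p \mid n$ rather than a comparison of $n$ with $p^3$, so the competing positive and negative contributions are never in delicate balance and no separate negative regime (cf.\ Lemma \ref{lem:neg}) or fine estimates (cf.\ Lemma \ref{lem:abound} and Lemma \ref{lem:pos}) are required.
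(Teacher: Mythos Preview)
Your proposal is correct and follows the same overall strategy as the paper: compute the $n$th coefficient explicitly, split into the cases $n \not\equiv r \pmod t$ (trivially positive) and $n \equiv r \pmod t$, and in the latter case show that $(n^\ell+1)\sigma_k(n) - (n^k+1)\sigma_\ell(n) \geq 0$ with equality exactly when $n$ is prime (or $n=1$). The only difference is that the paper dispatches this last step in one line by citing Lemma~2.1 of \cite{CraigIttersumOno} (cf.\ Lemma~2 of \cite{Lelievre}), whereas you supply a self-contained proof via the divisor pairing $b_{k,\ell}(n,1) + b_{k,\ell}(n,n) = 0$ together with the elementary bound $d^{\ell-k}(n^k+1) \leq (n^\ell + n^{\ell-k})/4 < n^\ell + 1$ for $1 < d \leq n/2$. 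Your version is thus slightly more informative for a reader without the reference at hand, at the cost of a few extra lines; the underlying idea is the same. One trivial caveat: your decomposition $b_{k,\ell}(n) = b_{k,\ell}(n,1) + b_{k,\ell}(n,n) + \sum_{1<d<n}$ tacitly assumes $n \geq 2$ (else $1$ and $n$ coincide), but the $n=1$ case needed for the nonnegativity claim is immediate from $b_{k,\ell}(1) = 2 - 2 = 0$.
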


\begin{proof}
    We see via \eqref{def:g} and \eqref{def:grt} that the $n^{\text{th}}$ Fourier coefficient of $f_{k,\ell}^{r,t}$ is given by
    \begin{align*}
        b_{k,\ell}^{r,t}(n) =
        \begin{cases}
            \sum_{d \mid n} ((n^{\ell} + 1)d^k - (n^k + 1)d^{\ell}) & n \equiv r \pmod{t}, \\
            \sum_{d \mid n} (n^{\ell} + 1)d^k & n \not\equiv r \pmod{t}.
        \end{cases}
    \end{align*}
    If $n \equiv r \pmod{t}$, then $b_{k,\ell}^{r,t}(n) \geq 0$ and vanishes if and only if $n$ is prime or $n = 1$ by Lemma 2.1 of \cite{CraigIttersumOno} (see also Lemma 2 of \cite{Lelievre}). It is then clear $b_{k,\ell}^{r,t}(n) > 0$ for all $n \not\equiv r \pmod{t}$.
\end{proof}

\begin{lemma} \label{lem:inf-ap}
    For any fixed $k \geq 1$ odd, the elements of the set $\{f_{k,\ell}^{r,t} ~\colon \ell > k \text{ odd}\}$ are linearly independent over $\C$.
\end{lemma}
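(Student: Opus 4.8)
The plan is to mimic the weight-separation argument already used in Lemma \ref{lem:inf-cubed} for the cubes-of-primes case. The key structural fact is that the differential operator $D$ raises the weight of a quasimodular form by $2$, so each application of $D^\ell$ produces a component in a strictly higher weight space $\widetilde{M}_k$. I would fix $k \geq 1$ odd and take distinct odd integers $\ell_j > \cdots > \ell_1 > k$, then examine a putative nontrivial linear combination $m_1 f_{k,\ell_1}^{r,t} + \cdots + m_j f_{k,\ell_j}^{r,t}$ and identify its top-weight component.

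The first step is to pin down the weight of each summand in the definition \eqref{def:fklrt}. Since $G_{k+1}$ has weight $k+1$ and $G_{\ell_i+1}^{r,t}$ has weight $\ell_i + 1$ (this is stated right after \eqref{def:grt}), applying $D^\ell$ gives $D^{\ell_i} G_{k+1} \in \widetilde{M}_{k + 2\ell_i + 1}$ and $D^k G_{\ell_i+1}^{r,t} \in \widetilde{M}_{\ell_i + 2k + 1}$. Comparing these four weights across the definition, I would argue that the unique highest-weight term in $f_{k,\ell_i}^{r,t}$ is $D^{\ell_i} G_{k+1}$, of weight $k + 2\ell_i + 1$. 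Because the $\ell_i$ are distinct, these top weights $k + 2\ell_i + 1$ are distinct as well, with the strictly largest attained at $\ell_j$.

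The second step is to conclude linear independence from this weight separation. In the linear combination, the component lying in weight $k + 2\ell_j + 1$ receives contributions only from $f_{k,\ell_j}^{r,t}$ (all other summands have strictly smaller top weight), and that contribution is exactly $m_j D^{\ell_j} G_{k+1}$. Since $D^{\ell_j} G_{k+1} \neq 0$ — the operator $D$ sends $q^n \mapsto n^k q^n$ and $G_{k+1}$ has nonzero Fourier coefficients — the weight-$(k+2\ell_j+1)$ component is nonzero unless $m_j = 0$. Iterating (or invoking the directness of the weight grading on $\widetilde{M}$), every $m_i$ must vanish, establishing linear independence over $\C$.

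I do not expect a serious obstacle here, since the level does not interfere with the weight grading: although $G_{\ell+1}^{r,t}$ has level $t^2$ rather than level $1$, it still lives in a definite weight space, and the argument only compares weights. The one point requiring a little care is verifying that $D^{\ell_i} G_{k+1}$ is genuinely the strictly dominant term rather than being cancelled or tied by $D^k G_{\ell_i+1}^{r,t}$; this reduces to the inequality $k + 2\ell_i + 1 > \ell_i + 2k + 1$, i.e. $\ell_i > k$, which holds by hypothesis. Thus the mild subtlety is merely bookkeeping of the four weights, and the proof should run in a few lines exactly parallel to Lemma \ref{lem:inf-cubed}.
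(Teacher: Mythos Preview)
Your proposal is correct and follows exactly the same approach as the paper: identify the top-weight component $D^{\ell_j}G_{k+1}\in\widetilde{M}_{k+2\ell_j+1}$ in any nontrivial linear combination and conclude nonvanishing by the weight grading, in direct parallel to Lemma~\ref{lem:inf-cubed}. The paper compresses this into a single sentence, while you spell out the weight comparisons and the irrelevance of the level, but the argument is the same.
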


\begin{proof}
    As in Lemma \ref{lem:inf-cubed}, we see that any nontrivial linear combination of $f_{k,\ell_1}^{r,t}, \dots, f_{k,\ell_j}^{r,t}$ for fixed $k$ and $\ell_j > \dots > \ell_1 > k$ odd has a component of weight $k + 2\ell_j + 1$ by \eqref{def:fklrt}, and is thus nonzero.
\end{proof}

\begin{lemma} \label{lem:main-ap}
    For all odd non-negative integers $k,\ell$ with $\ell > k$, there exist constants $c_{\vec{a}}(s) \in \C$ such that
    \begin{align*}
        f_{k,\ell}^{r,t} &= \sum_{\abs{\vec{a}} \leq k + 2\ell} \sum_{s=0}^{t-1} c_{\vec{a}}(s)\cU_{\vec{a}}(\zeta_t^sq).
    \end{align*}
\end{lemma}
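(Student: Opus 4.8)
The plan is to exploit the elementary fact that translating the modular variable by $s/t$ is exactly a twist of the $q$-expansion: since $q = e^{2\pi i \tau}$, replacing $\tau$ by $\tau + s/t$ sends $q \mapsto \zeta_t^s q$. Because $D = \frac{1}{2\pi i}\frac{d}{d\tau}$ is translation-invariant, $D$ commutes with this substitution, so $D^j[\,F(\tau + s/t)\,] = (D^j F)(\tau + s/t)$ for any quasimodular $F$. The subtlety is that $f_{k,\ell}^{r,t}$ itself has level $t^2$, so Theorem 19 (2) of \cite{CraigIttersumOno} --- which expresses \emph{level-one} quasimodular forms as linear combinations of the $\cU_{\vec{a}}(q)$ --- cannot be applied to it directly. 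Instead I would decompose $f_{k,\ell}^{r,t}$ into genuinely level-one constituents and reintroduce the twists afterwards.

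Concretely, first observe that $(D^\ell + 1)G_{k+1}$ and $(D^k+1)G_{\ell+1}$ are both level-one quasimodular forms, since $G_{k+1}$ and $G_{\ell+1}$ have level $1$ and $D$ preserves level. Their mixed weights are at most $k + 2\ell + 1$ and $\ell + 2k + 1$ respectively, and since $\ell > k$ both are at most $k + 2\ell + 1$. Applying Theorem 19 (2) of \cite{CraigIttersumOno} with the weight-to-degree relation used in the proof of Theorem \ref{thm:primes-cubed} (a form in $\widetilde{M}_{\leq w}$ is a combination of $\cU_{\vec{a}}(q)$ with $\abs{\vec{a}} \leq w - 1$), I obtain
\begin{align*}
    (D^\ell + 1)G_{k+1} = \sum_{\abs{\vec{a}} \leq k + 2\ell} c_{\vec{a}}\,\cU_{\vec{a}}(q), \qquad (D^k + 1)G_{\ell+1} = \sum_{\abs{\vec{a}} \leq k + 2\ell} b_{\vec{a}}\,\cU_{\vec{a}}(q),
\end{align*}
for some $c_{\vec{a}}, b_{\vec{a}} \in \Q$ (padding with zero coefficients beyond the second form's natural bound $\abs{\vec{a}} \leq \ell + 2k$).

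Next I would handle the twisted term. By the definition \eqref{def:grt} of $G_{\ell+1}^{r,t}$ and the translation-invariance of $D$,
\begin{align*}
    (D^k + 1)G_{\ell+1}^{r,t}(\tau) = \frac{1}{t}\sum_{s=0}^{t-1}\zeta_t^{-rs}\,\bigl[(D^k + 1)G_{\ell+1}\bigr](\tau + s/t).
\end{align*}
Using the $q \mapsto \zeta_t^s q$ correspondence on the expansion of $(D^k+1)G_{\ell+1}$ above gives $\bigl[(D^k+1)G_{\ell+1}\bigr](\tau + s/t) = \sum_{\abs{\vec{a}}\leq k+2\ell} b_{\vec{a}}\,\cU_{\vec{a}}(\zeta_t^s q)$, so the twisted term becomes $\frac{1}{t}\sum_{s=0}^{t-1}\zeta_t^{-rs}\sum_{\abs{\vec{a}}\leq k+2\ell} b_{\vec{a}}\,\cU_{\vec{a}}(\zeta_t^s q)$. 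Finally I would substitute both expansions into \eqref{def:fklrt}, note that the untwisted first term contributes only to the $s = 0$ slot since $\cU_{\vec{a}}(\zeta_t^0 q) = \cU_{\vec{a}}(q)$, and collect by $s$, setting $c_{\vec{a}}(0) := c_{\vec{a}} - \tfrac{1}{t}b_{\vec{a}}$ and $c_{\vec{a}}(s) := -\tfrac{1}{t}\zeta_t^{-rs}b_{\vec{a}}$ for $s \neq 0$. This yields exactly the claimed expression with $c_{\vec{a}}(s) \in \C$.

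The only real obstacle is the bookkeeping ensuring that the invocation of Theorem 19 (2) is legitimate --- namely that the constituents really are level-one forms of mixed weight at most $k + 2\ell + 1$, so that the resulting degree bound is $\abs{\vec{a}} \leq k + 2\ell$ --- together with the verification that $D$ commutes with the shift $\tau \mapsto \tau + s/t$. Everything else is a linear rearrangement. I would take care to emphasize that it is the decomposition into level-one pieces, rather than a direct application to the level-$t^2$ form $f_{k,\ell}^{r,t}$, that is responsible for the twisted generating functions $\cU_{\vec{a}}(\zeta_t^s q)$ appearing in the final expression.
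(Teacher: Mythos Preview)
Your proposal is correct and follows essentially the same route as the paper: both decompose $f_{k,\ell}^{r,t}$ into the two level-one pieces $(D^\ell+1)G_{k+1}$ and $(D^k+1)G_{\ell+1}$, invoke Theorem~19~(2) of \cite{CraigIttersumOno} on each, and then reintroduce the twist via the definition \eqref{def:grt} to obtain the $\cU_{\vec{a}}(\zeta_t^s q)$. Your explicit formulas for $c_{\vec{a}}(s)$ and your remark that $D$ commutes with $\tau\mapsto\tau+s/t$ match the paper's computation, with the degree bound $\abs{\vec{a}}\leq k+2\ell$ arising exactly as in the paper from $\abs{\vec{a}}+\ell(\vec{a})\leq k+2\ell+1$ together with $\ell(\vec{a})\geq 1$.
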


\begin{proof}
    Since $D$ raises the weight of a quasimodular form by $2$, we have that
    \begin{align*}
        (D^{\ell} + 1)G_{k+1} \in \widetilde{M}_{\leq k+2\ell+1}, \ \ \ (D^k + 1)G_{\ell+1} \in \widetilde{M}_{\leq 2k+\ell+1}.
    \end{align*}
    Thus, by Theorem 19 (2) of \cite{CraigIttersumOno}, there exist constants $b_{\vec{a}}$, $b_{\vec{a}}' \in \Q$, such that
    \begin{align*}
        (D^{\ell} + 1)G_{k+1} = \sum_{\abs{\vec{a}} + \ell(\vec{a}) \leq k+2\ell + 1} b_{\vec{a}}~\cU_{\vec{a}}(q), \ \ \ (D^k + 1)G_{\ell+1} = \sum_{\abs{\vec{a}} + \ell(\vec{a}) \leq 2k+\ell + 1} b_{\vec{a}}'~\cU_{\vec{a}}(q).
    \end{align*}
    Then, by \eqref{def:grt}, we have
    \begin{align*}
        (D^k + 1)G_{\ell+1}^{r,t} &= \frac{1}{t} \sum_{s=0}^{t-1} \sum_{\abs{\vec{a}} + \ell(\vec{a}) \leq 2k+\ell + 1} \zeta_t^{-rs} b_{\vec{a}}'~\cU_{\vec{a}}(\zeta_t^s q),
    \end{align*}
    which we may rewrite with constants $b_{\vec{a}}'(s) := \frac{1}{t}\zeta_t^{-rs}b_{\vec{a}}'$ as
    \begin{align} \label{eq:grtexp}
        (D^k + 1)G_{\ell+1}^{r,t} &= \sum_{\abs{\vec{a}} + \ell(\vec{a}) \leq 2k+\ell + 1} \sum_{s=0}^{t-1} b_{\vec{a}}(s)\cU_{\vec{a}}(\zeta_t^sq).
    \end{align}
    Likewise, setting $b_{\vec{a}}(0) := b_{\vec{a}}$ and $b_{\vec{a}}(s) := 0$ for $s > 0$, we may write
    \begin{align} \label{eq:gexp}
        (D^{\ell} + 1)G_{k+1} &= \sum_{\abs{\vec{a}} + \ell(\vec{a}) \leq k+2\ell + 1} \sum_{s=0}^{t-1} b_{\vec{a}}(s)\cU_{\vec{a}}(\zeta_t^sq).
    \end{align}
    Thus, combining \eqref{eq:gexp} and \eqref{eq:grtexp}, we have
    \begin{align*}
        (D^{\ell} + 1)G_{k+1} - (D^k + 1)G_{\ell+1}^{r,t} &= \sum_{\abs{\vec{a}} + \ell(\vec{a}) \leq k+2\ell + 1} \sum_{s=0}^{t-1} b_{\vec{a}}'(s)\cU_{\vec{a}}(\zeta_t^sq) - \sum_{\abs{\vec{a}} + \ell(\vec{a}) \leq 2k+\ell + 1} \sum_{s=0}^{t-1} b_{\vec{a}}'(s)\cU_{\vec{a}}(\zeta_t^sq).
    \end{align*}
    Since $\ell > k$ and the length $\ell(\vec{a}) \geq 1$, setting 
    \begin{equation*}
        c_{\vec{a}}(s) :=
        \begin{cases}
            b_{\vec{a}}(s) - b_{\vec{a}}'(s) & \text{if } \abs{\vec{a}} + \ell(\vec{a}) \leq k + 2\ell + 1, \\
            0 & \text{otherwise},
        \end{cases}
    \end{equation*}
    for each $\vec{a}$ and $0 \leq s < t$, we may write
    \begin{align*}
        (D^{\ell} + 1)G_{k+1} - (D^k + 1)G_{\ell+1}^{r,t} &= \sum_{\abs{\vec{a}} \leq k + 2\ell} \sum_{s=0}^{t-1} c_{\vec{a}}(s)\cU_{\vec{a}}(\zeta_t^sq),
    \end{align*}
    proving our result.
\end{proof}

\subsection{Proof of Theorem \ref{thm:primes-ap}}
Let $\ell \geq 3$ be odd and $d = 2\ell + 1$. By Lemma \ref{lem:primes-ap}, $f_{1,\ell}^{r,t}$ strongly detects primes congruent to $r$ modulo $t$. The set $\{f_{1,\ell}^{r,t}~\colon \ell \geq 3\}$ thus contains infinitely many linearly independent $q$-series which strongly detect primes congruent to $r$ modulo $t$ by Lemma \ref{lem:inf-ap}, all of which may be written as
\begin{align*}
    f_{1,\ell}^{r,t} = \sum_{\abs{\vec{a}} \leq d} \sum_{s=0}^{t-1} c_{\vec{a}}(s)\cU_{\vec{a}}(\zeta_t^sq)
\end{align*}
for constants $c_{\vec{a}}(s) \in \C$ by Lemma \ref{lem:main-ap}. This completes the proof of the theorem.

\section*{Appendix}

Here we give complete expressions for the examples given in the introduction. We have that
\begin{tiny}
\begin{align*}
    g^\ast(n) &:= 111800296700031174473803912086849297415061538120147327369024000 M_{(1)}(n) \\
    &-166752101806011768239059984406622080125739818487572078291361760 M_{(3)}(n) \\
    &+62767494936926484419500539507127762695263456351080655000628648 M_{(5)}(n) \\
    &-8093327520713830454881403544414940189352187143324915609260800 M_{(7)}(n) \\
    &+280441876128809798819406207780253229131515792004956171930233 M_{(9)}(n) \\
    &-2796626716231217376090778794330046298729981595955539822240 M_{(11)}(n) \\
    &-7786923725330582178016165199689395032162976346176403887 M_{(13)}(n) \\
    &+228492332183970584572974559805984237526583822048824920 M_{(15)}(n) \\
    &-1215830626333999290688149213865467730310825113076517 M_{(17)}(n) \\
    &+2586402206320506298967818494987523980980061899880 M_{(19)}(n) \\
    &-1410067106844346141699284499808017619922382477 M_{(21)}(n) \\
    &-3994171532397060636616394065429339974329441343990850859754624000 M_{(1,1)}(n) \\
    &+2882656654596970644128099950941854712719970786862658002184240640 (M_{(1,3)}(n) + M_{(3,1)}(n)) \\
    &-55606741605506434789774529597134283862175560363960917017922880 (M_{(1,5)}(n) + M_{(5,1)}(n)) \\
    &-31553740779586632034902588733558781970650743391743394004783040 (M_{(1,7)}(n) + M_{(7,1)}(n)) \\
    &+1236365123530668916047274374112405703022697561233767698983840 (M_{(1,9)}(n) + M_{(9,1)}(n)) \\
    &-8134808611384895119269896023741057784440125881909492385720 (M_{(1,11)}(n) + M_{(11,1)}(n)) \\
    &-59580798330731620522824089676425339773368687751394331040 (M_{(1,13)}(n) + M_{(13,1)}(n)) \\
    &+726670078684395930235260971431002859974544415177094000 (M_{(1,15)}(n) + M_{(15,1)}(n)) \\
    &-2518073299368806196400137760585407383384131378897920 (M_{(1,17)}(n) + M_{(17,1)}(n)) \\
    &+1015449367245264541365106760948433593634244514120 (M_{(1,19)}(n) + M_{(19,1)}(n)) \\
    &+34425095416505212317885657088677311603740090774622195871398400000 M_{(1,1,1)}(n) \\
    &+832023026419490860720168658116504249351153130277635758982553600 (M_{(1,1,3)}(n) + M_{(1,3,1)}(n) + M_{(3,1,1)}(n)) \\
    &-543579483064233292326674999312725986268529256505271978016384000 (M_{(1,1,5)}(n) + M_{(1,5,1)}(n) + M_{(5,1,1)}(n)) \\
    &-92019503837778700907398744960854625745491505616944777544192000 (M_{(1,1,7)}(n) + M_{(1,7,1)}(n) + M_{(7,1,1)}(n)) \\
    &+4269209011502676546086277141098184955545823619844603765888000 (M_{(1,1,9)}(n) + M_{(1,9,1)}(n) + M_{(9,1,1)}(n)) \\
    &-1229617491336773807894731990558973652669489467404488268800 (M_{(1,1,11)}(n) + M_{(1,11,1)}(n) + M_{(11,1,1)}(n)) \\
    &-229089308125621772523737067819807506681260603464966144000 (M_{(1,1,13)} + M_{(1,13,1)}(n) + M_{(13,1,1)}(n)) \\
    &+2160624056705401494233143285716822486526356010777267200 (M_{(1,1,15)}(n) + M_{(1,15,1)}(n) + M_{(15,1,1)}(n)),
\end{align*}
\end{tiny}
which corresponds to the form $g_{1,3}$. We also have that
\begin{align*}
    f^\ast(n) &= 4(199 + 21\omega^{n-1} + 21\omega^{2n-2})M_{(1)}(n) + 7(1 - 80\omega^{n-1} - 80\omega^{2n-2})M_{(3)}(n)\\
    &+ 42(7 - 2\omega^{n-1} - 2\omega^{2n-2})M_{(5)}(n) + 23M_{(7)}(n)  \\
    &- 1680(11 - 4\omega^{n-1} - 4\omega^{2n-2})M_{(1,3)}(n) - 1680(11 - 4\omega^{n-1} - 4\omega^{2n-2})M_{(3,1)}(n)\\
    &- 10752M_{(1,1)}(n) - 3024M_{(1,5)}(n) - 3024M_{(5,1)}(n) +282240M_{(1,1,1)}(n) \\
    &+ 120960M_{(1,1,3)}(n) + 120960M_{(1,3,1)}(n) + 120960M_{(3,1,1)}(n) - 161280M_{(1,1,1,1)}(n),
\end{align*}
which corresponds to the form $f_{1,3}^{1,3}$.

\end{document}